\let\p\partial
\numberwithin{equation}{section}
\newcommand{\Ric}{\mathrm{Ric}}
\theoremstyle{plain}
\newtheorem{prop}{Proposition}
\newtheorem{lemma}{Lemma}
\newtheorem{theorem}{Theorem}
\newtheorem{cor}{Corollary}
\newtheorem*{theorem-main}{Main Theorem}
\theoremstyle{definition}
\newtheorem*{rem}{Remark}
\newtheorem*{theorem-z}{Theorem}
\newcommand{\Riem}{{\mathcal R}}
\newcommand{\Diff}{\mathrm{Diff}}
\newcommand{\im}{\mathrm{Im}\ }
\newcommand{\BDiff}{\mathrm{BDiff}}
\newcommand{\bM}{{\mathcal{M}}}
\newcommand{\Q}{{\mathbb Q}}
\newcommand{\M}{{\mathcal{M}}}
\begin{document}

\title [Observer Moduli space of Ricci positive metrics] {Homotopy groups
  of the observer moduli space of \\ Ricci positive metrics}

\begin{abstract} 
The observer moduli space of Riemannian metrics is the quotient of the
space $\Riem(M)$ of all Riemannian metrics on a manifold $M$ by the
group of diffeomorphisms $\Diff_{x_0}(M)$ which fix both a basepoint
$x_0$ and the tangent space at $x_0$. The group $\Diff_{x_0}(M)$ acts
freely on $\Riem(M)$ providing $M$ is connected. This offers certain
advantages over the classic moduli space, which is the quotient by the
full diffeomorphism group.  Results due to Botvinnik, Hanke, Schick
and Walsh, and to Hanke, Schick and Steimle have demonstrated that the
higher homotopy groups of the observer moduli space
$\mathcal{M}_{x_0}^{s>0}(M)$ of positive scalar curvature metrics are,
in many cases, non-trivial.  The aim in the current paper is to
establish similar results for the moduli space
$\mathcal{M}_{x_0}^{\Ric>0}(M)$ of metrics with positive Ricci
curvature. In particular we show that for a given $k$, there are
infinite order elements in the homotopy group
$\pi_{4k}\mathcal{M}_{x_0}^{\Ric>0}(S^n)$ provided the dimension $n$
is odd and sufficiently large. In establishing this we make use of a
gluing result of Perelman. We provide full details of the proof of
this gluing theorem, which we believe have not appeared before in the
literature. We also extend this to a family gluing theorem for Ricci
positive manifolds.
\end{abstract}

\author{Boris Botvinnik}
\address{
Department of Mathematics\\
University of Oregon \\
Eugene, OR, 97405\\
USA
}
\email{botvinn@uoregon.edu}

\author{Mark G. Walsh}
\address{Department of Mathematics and Statistics\\
National University of Ireland Maynooth\\
Maynooth\\
Ireland}
\email{mark.walsh@mu.ie}

\author{David J. Wraith}
\address{Department of Mathematics and Statistics\\
National University of Ireland Maynooth\\
Maynooth\\
Ireland}
\email{david.wraith@mu.ie}
\subjclass[2000]{53C27, 57R65, 58J05, 58J50}

\keywords{Positive Ricci metrics, Hatcher bundles, Moduli spaces.}
\date{\today}

\maketitle

\renewcommand{\baselinestretch}{1.1}\normalsize
\tableofcontents

\renewcommand{\baselinestretch}{1.3}\normalsize

\section{Introduction}\label{sec:Intro}
\subsection{Motivation and main result}
In recent years, there have been great efforts made to better
understand the topology of moduli spaces of Riemannian metrics of
positive scalar curvature on a smooth compact (usually spin) manifold;
see {\cite{Carr,BG,BHSW,HSS}}. Apart from results of Kreck and Stolz
in \cite{KS} and Wraith in \cite{Wraith} concerning path-connectivity,
we know very little about topology of the corresponding moduli spaces
of positive Ricci curvature metrics.  (In this context we should also
mention work of Dessai, Klaus and Tuschmann on moduli spaces of
non-negative sectional curvature metrics in \cite{DKT}, and the
results of Crowley, Schick and Steimle on the space of Ricci positive
metrics on certain manifolds, see \cite{CSS}.)  Whether or not there
is any non-triviality in the higher homotopy groups of such moduli
spaces is still an open question.  Here we study the topology of its
closest relative, the {\em observer moduli space}
$\M_{x_0}^{\Ric>0}(S^{n})$ of positive Ricci curvature metrics on the
sphere $S^{n}$.

We denote by $ds_{n}^{2}$ the standard round metric on $S^{n}$,
and by $[ds_{n}^{2}]$ its orbit in the moduli space
$\M_{x_0}^{\Ric>0}(S^{n})$. Here is our main result:
\begin{theorem-main}
For any $m\in \mathbb{N}$, there is an integer $N(m)$ such that for
all odd $n>N(m)$, the group $\pi_{i}(\M_{x_0}^{\Ric>0}(S^{n}),
[ds_{n}^{2}]) \otimes {\mathbb Q}$ is non-trivial when $i=4k$ and $k\leq m$.
\end{theorem-main}
We would like to emphasize that the observer moduli space is indeed
the most tractable moduli space of metrics. Let $\Riem(M)$ be the
space of all metrics on a compact closed manifold $M$, and $\Diff(M)$
be the group of diffeomorphisms which acts naturally on $\Riem(M)$ by
pull-back.  Even though the space $\Riem(M)$ is contractible, the
moduli space of all metrics, i.e. the orbit space $\Riem(M)/\Diff(M)$,
could be very complicated since some metrics have non-trivial isometry
groups. Hence, in general, the action of $\Diff(M)$ on the space of
metrics $\Riem(M)$ is far from being tractable. Following ideas from
Gauge Theory, we fix an observer, i.e. a base point $x_0\in M$
together with a frame at the tangent space $T_{x_0}M$. Then we obtain
the \emph{observer moduli space}
$\M_{x_0}(M):=\Riem(M)/\Diff_{x_0}(M)$, where the gauge group
$\Diff_{x_0}(M)$ fixes such an observer. It is easy to see that the
gauge group $\Diff_{x_0}(M)$ acts freely on the space of metrics
provided $M$ is a connected manifold. Then the observer moduli space
$\M_{x_0}(M)$ is homotopy equivalent to the classifying space
$\BDiff_{x_0}(M)$, and the corresponding observer moduli space
$\M_{x_0}^{\Ric>0}(M)$ of positive Ricci metrics maps naturally to
$\M_{x_0}(M)$, see below for more details.

The proof of Main Theorem is based on an analogous theorem by
Botvinnik, Hanke, Schick and Walsh for the observer moduli space of
positive {\em scalar} curvature metrics; see
\cite{BHSW}. Both proofs rely heavily on work of Farrell, Hsiang,
Hatcher and Goette; see \cite{FH} and \cite{GO}.  Techniques for
constructing families of metrics are also required. In the scalar
curvature case, this means a family version of the Gromov-Lawson
surgery technique from \cite{GL}, described in \cite{Walsh2}. Due to
the flexibility of the scalar curvature and the strength of the
Gromov-Lawson construction, this technique permits the detection of
non-triviality for manifolds besides the sphere. Unsurprisingly, the
Ricci curvature case requires a more delicate construction, which is based on a
gluing theorem of Perelman. As yet, we have not demonstrated
non-triviality beyond the case of the sphere.
\subsection{The observer moduli spaces of metrics}
Let $M$ be a smooth closed connected manifold of dimension $n$. We
denote by $\Riem(M)$, the space of all Riemannian metrics on $M$,
equipped with the smooth Whitney topology.  For a metric $g\in
\Riem(M)$, we denote by $s_g$ and $\Ric_g$ its scalar and Ricci
curvatures. We then consider the subspaces
\begin{equation*}
\Riem^{s>0}(M) \subset \Riem(M) \ \ \ \mbox{and} \ \ \ \Riem^{\Ric>0}(M)
\subset \Riem(M),
\end{equation*}
of metrics with positive scalar and positive Ricci curvatures
respectively.  Let $\Diff(M)$ be the group of diffeomorphisms on
$M$. This group acts on the space of metrics by pull-back:
\begin{equation*}
\Diff(M)\times \Riem(M) \to \Riem(M),  \ \ \ \ (\phi,g)\mapsto \phi^* g.
\end{equation*}
Recalling that $M$ is connected, we fix a base point $x_0\in M$ which
plays the role of an \emph{observer} in a sense which will become
clear shortly. Let $\Diff_{x_0}(M)\subset \Diff(M)$ be the subgroup of
diffeomorphisms $\phi : M\to M$ such that $\phi(x_0)=x_0$ and such
that the derivative $d\phi_{x_0}: T_{x_0}M\to T_{x_0}M$ is the
identity. This is the {\it observer diffeomorphism group} of $M$ based
at $x_0.$

As we have mentioned, the group $\Diff_{x_0}(M)$ acts freely on the
space of metrics $\Riem(M)$ provided $M$ is a connected manifold; see
\cite[Lemma 1.2]{BHSW}.  The orbit space $\bM_{x_0}(M):=
\Riem(M)/\Diff_{x_0}$ is the \emph{observer moduli space of metrics on
  $M$}.  Since the space $\Riem(M)$ is contractible and the action of
$\Diff_{x_0}(M)$ on $\Riem(M)$ is \emph{proper}, see \cite[Lemma
  1.2]{Ebin}, the observer moduli space $\bM_{x_0}(M)$ is homotopy
equivalent to the classifying space $\BDiff_{x_0}(M)$ of the group
$\Diff_{x_0}(M)$. In particular, we have a $\Diff_{x_0}(M)$-principal
bundle:
\begin{equation*}
\Diff_{x_0}(M)\to \Riem(M) \to \bM_{x_0}(M).
\end{equation*}  
By restricting the action of $\Diff_{x_0}(M)$ to the appropriate subspaces, we
obtain the \emph{observer moduli spaces}
\begin{equation*}
  \bM_{x_0}^{s>0}(M):= \Riem^{s>0}(M)/\Diff_{x_0}(M), \ \ \
  \bM_{x_0}^{\Ric>0}(M):= \Riem^{\Ric>0}(M)/\Diff_{x_0}(M),
\end{equation*} 
\emph{of positive scalar} and \emph{of positive Ricci curvature metrics}
respectively. The inclusions of spaces of metrics
$\Riem^{\Ric>0}(M)\subset \Riem^{s>0}(M)\subset \Riem(M)$ then induce the
maps of principal $\Diff_{x_0}(M)$-bundles:
\begin{equation}\label{eq:principal1}
\xymatrix{
  \Riem^{\Ric>0}(M) \ar[r] \ar[d] & \Riem^{s>0}(M) \ar[r] \ar[d]
  & \Riem (M) \ar[d]
  \\
  \bM_{x_0}^{\Ric>0}(M) \ar[r]^{\iota_1} & \bM_{x_0}^{s>0}(M) \ar[r]^{\iota_0} &
  \bM_{x_0}(M)   
}
\end{equation}   
We denote $\iota:= \iota_0\circ\iota_1:\bM_{x_0}^{\Ric>0}(M)\to
\bM_{x_0}(M)$. The fibre bundles
(\ref{eq:principal1}) give rise to the following
commutative diagram, where the horizontal lines
are Serre fibrations: 
\begin{equation}\label{eq:principal2}
\xymatrix{
  \Riem^{\Ric>0}(M) \ar[r] \ar[d] &  \bM_{x_0}^{\Ric>0}(M) \ar[r]^{\iota}
  \ar[d]^{\iota_1}
  & \bM_{x_0}(M) \ar[d]^{Id}
  \\
  \Riem^{s>0}(M) \ar[r] & \bM_{x_0}^{s>0}(M) \ar[r]^{\iota_0} &
  \bM_{x_0}(M) 
}
\end{equation}  
Letting $g_0$ denote a base point metric in $\Riem(M)$, we consider
the induced diagram of homotopy group homomorphisms below:
\begin{equation}\label{eq:principal3}
\xymatrix{
   \pi_{i}(\bM_{x_0}^{\Ric>0}(M), [g_0]) \ar[r]^{\iota_{*}}
  \ar[d]^{{\iota_1}_{*}}
  & \pi_{i}(\bM_{x_0}(M), [g_0]) \ar[d]^{Id}
  \\
   \pi_{i}(\bM_{x_0}^{s>0}(M), [g_0]) \ar[r]^{{\iota_0}_{*}} &
  \pi_{i}(\bM_{x_0}(M), [g_0])   
}
\end{equation}
It is well-known that an element in the homotopy group
$\pi_{i}(\bM_{x_0}(M), [g_0]) $ can be represented by a smooth fibre
bundle $E\to S^i$ with a fibre $M$. Hence to show that such an element
lies in the image of ${\iota_{0}}_{*}$, it is enough to show that
there exists a metric on the total space $E$ which restricts to a
psc-metric on every fibre, see \cite{BHSW}. Here our task is more
difficult: we have to construct such a metric on $E$ which is fibre-wise
Ricci-positive, and the methods used involve geometric
constructions which are quite different from the positive scalar
curvature case. This is one of the reasons why we restrict our
attention to the case when $M=S^n$.  Next, we focus on the geometrical
properties of the moduli space $\bM_{x_0}(M)$.

\subsection{The universal fibre metric}\label{universal-fibre}
As we have mentioned earlier, the observer moduli
  space $\bM_{x_0}(M)$ is homotopy equivalent to the classifying space
  $\BDiff_{x_0}(M)$.

We say that a fibre bundle $E\to X$ with fibre $M$ is a \emph{smooth
  $M$-fibre bundle} if its structure group is a subgroup of
$\Diff_{x_0}(M)$.  Now we consider the universal principal bundle $
\Riem(M)\rightarrow \M_{x_0}(M).  $ Here the group $\Diff_{x_0}(M)$
acts freely on $\Riem(M)$, and the Borel construction gives the
universal smooth $M$-fibre bundle $E(M)\to \M_{x_0}(M)$, where $E(M)
:= \Riem(M)\times_{\Diff_{x_0}(M)} M$.  Recall that the space
$\Riem(M)\times_{\Diff_{x_0}(M)} M$ is defined as the quotient of
$\Riem(M)\times M$ by the action of $\Diff_{x_0}(M)$ given by
$\phi.(h,x)=({(\phi^{-1})}^{*}h, \phi(x))$, where $\phi\in
\Diff_{x_0}(M)$, $h\in \Riem(M)$ and $x\in M$.

Given that $X$ is a paracompact Hausdorff space, recall that the
isomorphism classes of principal $\Diff_{x_0}(M)$-bundles over $X$ are
in one to one correspondence with homotopy classes $[X,\M_{x_0}(M)]$
of maps $X\rightarrow \M_{x_0}(M)$.  In particular, given a map
$f:X\rightarrow \M_{x_0}(M)$, we obtain a commutative diagram:
\begin{equation*}\label{eq:principal}
\xymatrix{
  E_f \ar[r] \ar[d] & E(M) \ar[d] 
  \\
 X \ar[r]^{f} & \M_{x_0}(M)}
\end{equation*}   
where the bundle $E_f\rightarrow X$ is the pull-back of the universal 
smooth $M$-fibre bundle by the map $f$.

There is however a more refined structure which we can associate to
such a bundle. The total space $E(M)=\Riem(M)\times_{\Diff_{x_0}(M)}
M$ admits a ``universal fibre metric" which we will now define. We
begin with an arbitrary point $[h,x]\in
\Riem(M)\times_{\Diff_{x_0}(M)} M$. The fibre at this point is of
course diffeomorphic to $M$. Let us now consider the tangent space to
this fibre. Suppose $(h,x), (h',x')\in \Riem(M)\times M $ both
represent the point $ [h,x]\in \Riem(M)\times_{\Diff_{x_0}(M)}
M$. Then the tangent spaces $T_{x}M$ and $T_{x'}M$ are isomorphically
related by the derivative map $\phi_{*}$ of some diffeomorphism
$\phi\in\Diff_{x_0}M$ which satisfies $ \phi(x)=x'$.  Thus, the
tangent space to $[h,x]$ can be thought of as the isomorphic
identification of all tangent spaces $T_{x'}M$ where $x'\in M$ lies in
the orbit of $x$ under the action of $\Diff_{x_0}M$.  Suppose now that
$[u], [v]$ denote tangent vectors to the fibre at $[x,h]$ represented
by tangent vectors $u,v\in T_{x}M$. We specify an inner product to the
tangent space to the fibre at $[h,x]$ by the following formula:
\begin{equation*}
\begin{split}
\langle [u],[v] \rangle_{[x,h]} = h_{x}(u,v)
\end{split}
\end{equation*}
where $h_x$ is the restriction of the Riemannian metric $h$ to the
tangent space $T_{x}M$.  It is an easy exercise to show that this is
well-defined and varies smoothly over $E(M)$; see \cite[p. 61]{WT}.
Notice that this does not give a Riemannian metric on $E(M)$ as we
only specify the inner product on fibres.

Given a map $f:X\rightarrow \M_{x_0}(M)$, this universal fibre metric
then pulls back to a continuous fibrewise family of Riemannian metrics
on $E_f$. More precisely, each fibre of the bundle $E_f\rightarrow X$,
already diffeomorphic to $M$, is now equipped with a Riemannian metric
which depends continuously on $X$. Clearly, varying the map $f$ by a
homotopy alters the fibrewise metric structure of the bundle.
Suppose, on the other hand, we begin with a fibrewise family of
metrics on an $M$-bundle over $X$.  Identifying fibres non-canonically
with a `standard' copy of $M$ and pulling back metrics leads to a
well-defined map $X\rightarrow \M_{x_0}(M)$.  Thus, we obtain a one to
one correspondence between maps $X\rightarrow \M_{x_0}(M)$ and
fibrewise families of metrics on $M$ which are parameterised by $X$.

Assuming $X$ is the sphere $S^{i}$, we are now brought back to the
homomorphism of homotopy groups
\begin{equation*}\label{homom}
  \iota_{*}:\pi_{i}(\M_{x_0}^{\Ric>0}(M), [g_0])\longrightarrow
  \pi_{i}( \M_{x_0}(M), [g_0]), 
\end{equation*}
induced by the inclusion $\iota:\M_{x_0}^{\Ric>0}(M)\hookrightarrow
\M_{x_0}(M)$. Let $f:S^{i}\rightarrow \M_{x_0}(M)$ represent an
element of $\pi_{i}(\M_{x_0}(M), [g_0])$. This element determines (and
is determined by) an $M$-bundle $E_f \to S^i$ as above, together with
a fibrewise family of metrics on $E_f$. Thus, it is possible to lift
this element of $\pi_{i}( \M_{x_0}(M), [g_0])$ to an element of
$\pi_{i}(\M_{x_0}^{\Ric>0}(M), [g_0])$, provided we can construct a
fibrewise family of positive Ricci curvature metrics on $E_f.$
\subsection{The work of Farrell and Hsiang} 
 At this stage we have established that lifting an element of
 $\pi_{i}( \M_{x_0}(M), [g_0])$ to $\pi_{i}(\M_{x_0}^{\Ric>0}(M),
 [g_0])$ involves the construction of a family of fiberwise Ricci
 positive Riemannian metrics on some bundle over $S^{i}.$ However, we
 have not yet discussed the particular elements in the homotopy groups
 of $ \M_{x_0}(M)$ which we plan to lift. It is here that we recall a
 result of Farrell and Hsiang, which identifies the rational homotopy
 groups of $\BDiff_{x_0}(S^n)$ in a stable range, using algebraic
 $K$-theory and Waldhausen $K$-theory computations; see
 \cite{FH}. Recalling that $\bM_{x_0}(S^{n})$ is homotopy equivalent
 to the classifying space $\BDiff_{x_0}(S^{n})$, the result of these
 computations can be stated as follows.
 \begin{theorem}\label{thm:farrel-hsiang}
   {\rm (Farrell and Hsiang, \cite{FH}.)}  For any $m\in \mathbb{N}$,
   there is an integer $N(m)$ such that for all odd $n>N(m)$ and
   $i\leq 4m$,
\begin{equation*}
\pi_i \bM_{x_0}(S^n)\otimes \Q =
\left\{
\begin{array}{cl}
\Q  & \mbox{if} \ n \ \mbox{odd and }i\equiv 0 \mod 4,
\\
0 & \mbox{otherwise}.
\end{array}
\right.
\end{equation*} 
\end{theorem}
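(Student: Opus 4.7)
The plan is to reduce the computation of $\pi_i\bM_{x_0}(S^n)\otimes\Q\cong\pi_i\BDiff_{x_0}(S^n)\otimes\Q$ to Borel's rational computation of the algebraic $K$-theory of $\Z$, via smoothing theory, Hatcher--Igusa pseudoisotopy stability, and Waldhausen's $A$-theory.

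First I would reduce from the sphere to the disk. Using the observer at $x_0$, an embedding $(D^n,0)\hookrightarrow(S^n,x_0)$ realising the prescribed $1$-jet is unique up to a contractible space of choices, so isotopy extension produces a fibration
\[
\Diff(D^n,\partial D^n)\longrightarrow\Diff_{x_0}(S^n)\longrightarrow\mathrm{Emb}^{1}_{x_0}(D^n,S^n)
\]
with contractible base. Equivalently, the natural evaluation $\Diff(S^n)\to SO(n+1)$ realises $\Diff(S^n)$ as a semidirect product of $SO(n+1)$ with $\Diff(D^n,\partial D^n)$; the observer condition kills the $SO(n+1)$ factor and yields a homotopy equivalence $\BDiff_{x_0}(S^n)\simeq\BDiff(D^n,\partial D^n)$.

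The heart of the argument is then the rational computation of $\pi_i\BDiff(D^n,\partial D^n)$ for $n$ large compared to $i$. Here one assembles three deep ingredients: (i) Hatcher's pseudoisotopy fibration $\Diff(D^n,\partial)\to\Diff(D^{n+1},\partial)\to P(D^n)$, combined with Morlet's lemma of disjunction, to translate the problem into the homotopy of the smooth pseudoisotopy space $P(D^n)$; (ii) Igusa's stability theorem, which identifies $P(D^n)$ with the stable pseudoisotopy space $\mathcal{P}(\ast)$ in a range $i\lesssim n/3$; and (iii) Waldhausen's theorem, which identifies stable pseudoisotopy with the Whitehead spectrum and, rationally, with a degree shift of $K(\Z)$. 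Putting these together yields a rational isomorphism of the shape $\pi_i\BDiff(D^n,\partial D^n)\otimes\Q\cong K_{i+1}(\Z)\otimes\Q$ in the relevant range.

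Borel's theorem then asserts that $K_j(\Z)\otimes\Q=\Q$ for $j\equiv 1\pmod 4$ and $j\geq 5$, and is zero in all other positive degrees. After the degree shift this yields non-triviality precisely in the asserted degrees $i\equiv 0\pmod 4$. The hypothesis that $n$ is odd ensures that Euler class contributions and other $n$-dependent obstructions fall outside the range $i\leq 4m$ once $n>N(m)$, and makes the parity conditions in the surgery arguments underlying (i) behave uniformly. The main obstacle is step (iii): Waldhausen's identification of stable pseudoisotopy with $K$-theory, combined with Igusa's stability range in (ii), is the deep technical core of \cite{FH} and admits no real shortcut; everything else in the argument is a matter of assembling long exact sequences and tracking degrees.
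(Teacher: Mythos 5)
The paper does not actually prove this statement: it is imported verbatim from Farrell--Hsiang \cite{FH} and used as a black box, so there is no internal argument to measure yours against. Judged on its own terms, your sketch has the right architecture for the external proof --- the reduction $\BDiff_{x_0}(S^n)\simeq\BDiff(D^n,\partial D^n)$, the passage to pseudoisotopy spaces, Igusa's stability theorem, Waldhausen's identification of stable pseudoisotopy with the Whitehead spectrum (hence rationally with a shift of $K(\Z)$), and Borel's computation --- and the degree bookkeeping $\pi_i\BDiff(D^n,\partial D^n)\otimes\Q\cong K_{i+1}(\Z)\otimes\Q$ does land the nonzero groups in degrees $i\equiv 0\bmod 4$.

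There is, however, a genuine gap in your treatment of the hypothesis that $n$ is odd. The parity of $n$ does not enter through ``Euler class contributions falling outside the range'': it enters through the canonical involution (``turn a concordance upside down'') on the stable concordance space. In the stable range one has, rationally, that $\pi_{i-1}\Diff(D^n,\partial D^n)\otimes\Q$ is the invariant part of $K_{i+1}(\Z)\otimes\Q$ under this involution twisted by a sign $(-1)^n$; the invariants are all of $K_{i+1}(\Z)\otimes\Q$ when $n$ is odd and vanish when $n$ is even. Without this step your argument would predict $\Q$ in degrees $\equiv 0\bmod 4$ for even $n$ as well, contradicting the theorem, so the involution analysis is not an optional refinement but the mechanism behind the odd/even dichotomy. (The route to the involution goes through block diffeomorphisms: $\widetilde{\Diff}(D^n,\partial)$ is rationally trivial since its homotopy groups are groups of homotopy spheres, and Hatcher's spectral sequence computes $\widetilde{\Diff}/\Diff$ from the concordance spaces together with the involution.) A secondary point: your fibration in (i) is misstated; the correct one is $\Diff(D^{n+1},\partial)\to P(D^n)\to\Diff(D^n,\partial)$, given by restricting a pseudoisotopy to the top face, and iterating it alone does not converge to the answer without the block-diffeomorphism input just described.
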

\noindent Thus, for appropriate $i$, we now have lots of non-trivial groups
$\pi_{i}(\M_{x_0}(S^{n}), [g_0]) \otimes {\mathbb Q}$.  This also
explains the hypotheses of the main theorem.

This leaves the question of which $S^n$-bundles over $S^i$ ($i \equiv
0 \mod 4$) can represent the non-trivial elements of
$\pi_{i}(\M_{x_0}(S^{n}), [g_0]) \otimes {\mathbb Q}.$ It turns out
that these elements can be represented by so-called `Hatcher
bundles'. In section \ref{Hatch} we will provide a description of
these, based on the work of Hatcher and Goette (see \cite{GO}). Our
approach to the topological construction of Hatcher bundles is guided
by the geometric constructions we must subsequently perform, namely
the production of fibrewise Ricci positive metrics. These metric
issues will be addressed in section \ref{Construct}, and will involve
a generalized version of a powerful gluing theorem due to
Perelman. Perelman's theorem and our generalization of this is the
subject of section \ref{Perelman}.

This work was initiated while the third named author was visiting the
first, and he would like to thank the University of Oregon for their
hospitality. He would also like to thank Sebastian
Goette for a useful discussion about Hatcher
bundles, and Janice Love for her help with the Maple code used in
section \ref{Perelman}.

\section{Gluing manifolds and a theorem of Perelman}\label{sec:Perelman}
\subsection{The gluing construction}
The purpose of this section is to present a theorem of Perelman which
allows for the construction of Ricci positive metrics on a closed
manifold by gluing together certain Ricci positive metrics on
manifolds with boundary. This result is the principal geometric tool
used in achieving our goal of obtaining a fibrewise family of positive
Ricci curvature metrics on the total space of a Hatcher
bundle. Perelman's theorem is originally published
in \cite{P} and justified with a brief outline, omitting the
details. Our aim is to provide those details, and in so doing offer a
more comprehensive justification, currently lacking in the literature,
for a very useful result. In our experience this result is not widely
known, and we hope that by offering these details we will help provide
some of the intuition behind the construction, as well as
bringing it to a potentially wider audience. Moreover, these details are
important for establishing the family version of Perelman's theorem, 
which appears at the end of this section.

We begin with a brief review of the notion of gluing smooth manifolds,
something we make extensive use of throughout the paper. Consider a
pair of smooth $n$-dimensional manifolds, $M_{1}$ and $M_2$, each with
non-empty boundary. We further assume that $\partial M_1$ and
$\partial M_2$ are diffeomorphic via a diffeomorphism $\phi:\partial
M_1\rightarrow \partial M_2$. From this, we may form the adjunction
space, $W=M_{1}\cup_{\phi}M_{2}$, obtained as the quotient of
$M_{1}\sqcup M_2$ by identifying each $x\in \partial M_1$ with
$\phi(x)\in\partial M_2$. In particular, the quotient map embeds both
$M_1$ and $M_2$ into this space. For simplicity then, we identify
$M_1$ and $M_2$ with their images in $W$ and write $X=\partial
M_1=\partial M_2$.  Consider collar neighbourhoods $\partial M_1
\times (-\delta,0]$ and $\partial M_2 \times [0,\delta)$ about $X$ for
    some small $\delta>0,$ for example determined by the normal
    coordinate from $\partial M_1,$ $\partial M_2$ with respect to
    some choice of metrics on $M_1,$ $M_2$. Denote by $N$ the union of
    the images of these collar neighbourhoods in $W.$ We then have a
    homeomorphism between $X \times (-\delta,\delta)$ and $N$ given by
\begin{equation*}
(x,r)\mapsto
\left\{
\begin{array}{cl}
(m_1,r)  & \mbox{if} \ r\le 0
\\
(\phi(m_1),r) & \mbox{if} \ r\ge 0,
\end{array}
\right.
\end{equation*}
where $x \in X$ is the equivalence class $x=\{m_1,\phi(m_1)\}$ for
some $m_1 \in \partial M_1.$

We can now use this to give $N$ a differentiable
structure, by pulling back the standard differentable structure on
$X \times (-\delta,\delta)$ via the inverse homeomorphism. 
Finally, we extend this differentiable
structure over $M_1$ and $M_2$ to give a differentiable structure on
$W$. Although there are many choices involved in this construction,
leading to many possible differentiable structures, it is a well known
fact that the diffeomorphism type of the resulting smooth manifold $W$
is independent of these choices; see \cite[Ch. 8, Sec. 2]{Hirsch}.
\begin{figure}[!htbp]
\begin{picture}(430,70)%
\includegraphics{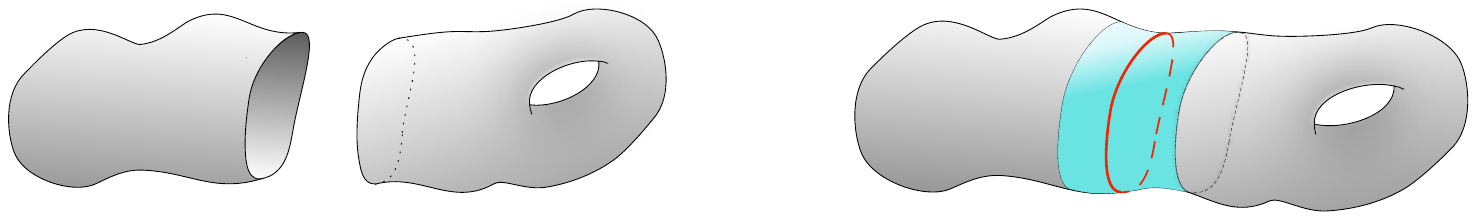}%
\end{picture}%
\setlength{\unitlength}{2500sp}%
\begingroup\makeatletter\ifx\SetFigFont\undefined%
\gdef\SetFigFont#1#2#3#4#5{%
  \reset@font\fontsize{#1}{#2pt}%
  \fontfamily{#3}\fontseries{#4}\fontshape{#5}%
  \selectfont}%
\fi\endgroup%
\begin{picture}(0,0)(430,70)
\put(-10000,1000){\makebox(0,0)[lb]{\smash{{\SetFigFont{10}{8}{\rmdefault}{\mddefault}{\updefault}{\color[rgb]{0,0,0}$M_1$}%
}}}}
\put(-7500,1000){\makebox(0,0)[lb]{\smash{{\SetFigFont{10}{8}{\rmdefault}{\mddefault}{\updefault}{\color[rgb]{0,0,0}$M_2$}%
}}}}
\put(-2630,1000){\makebox(0,0)[lb]{\smash{{\SetFigFont{10}{8}{\rmdefault}{\mddefault}{\updefault}{\color[rgb]{0,0,0}$\textcolor{red}{X}$}%
}}}}
\end{picture}%
\caption{The manifolds with boundary, $M_1$ and $M_2$ (left) along
  with the adjunction space $W$ and tubular neighborhood $N$ of
  $X\subset W$ (right)}
\label{gluing1}
\end{figure}   
We now consider such a gluing in the Riemannian setting, equipping
$M_1$ and $M_2$ with Riemannian metrics, $h_1$ and $h_2$. Let us
assume that the restrictions of these metrics to their respective
boundaries are isometric via $\phi$. More precisely, we assume
\begin{equation*}
h_1|_{\partial M_1}=\phi^{*}h_2|_{\partial M_2}.
\end{equation*}
This automatically leads to a well-defined $C^{0}$-metric
$h=h_1\cup_{\phi} h_2$, on $M_{1}\cup_{\phi}M_{2}.$ Notice that this
adjunction metric is smooth if and only if it is smooth in a collar
neighbourhood of $X \subset M_1 \cup_\phi M_2.$ In view of the of the
adjunction space discussion above, this will be the case if the metric
$h_1|_{\partial M_1 \times (-\delta,0]}$ glues smoothly with $(\phi
  \times id_{[0,\delta)})^*(h_2|_{\partial M_2 \times [0,\delta)}).$
\subsection{The theorem of Perelman}
The above construction gives the $C^0$-metric $h_1 \cup_\phi h_2$ on
$M_1\cup M_2$.  We will be interested in smoothing the metric $h_1
\cup_\phi h_2$ within positive Ricci curvature in the case where $h_1$
and $h_2$ individually have positive Ricci curvature. This is not
always possible. However, the following theorem of Perelman shows that
under certain additional assumptions involving the normal (i.e. the
principal) curvatures of $h_1$ and $h_2$ at the boundary, such a
smoothing can be performed.
\begin{theorem}\label{Perelman}
Let $(M_1, h_1)$ and $(M_2, h_2)$ be a pair of Riemannian manifolds
with positive Ricci curvature and $\phi:(\partial M_1, h_1|_{\partial
M_1})\rightarrow (\partial M_2, h_2|_{\partial M_2} )$, an isometry of
their boundaries. Suppose that the normal curvatures of $
h_1|_{\partial M_1}$ with respect to the outward normal are greater
than the negatives of corresponding normal curvatures of
$h_2|_{\partial M_2}$ with respect to its outward normal.  Then the
$C^{0}$-metric, $h=h_1\cup_{\phi} h_2$ on the smooth manifold
$M_{1}\cup_{\phi}M_{2}$ can be replaced by a $C^{\infty}$-metric with
positive Ricci curvature, agreeing with $h_1$ and $h_2$ outside a
neighbourhood of the glued boundaries.
\end{theorem}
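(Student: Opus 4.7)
The plan is to work in a two-sided collar neighborhood $N \cong X \times (-\delta,\delta)$ of $X = \partial M_1 = \partial M_2$ in $W$. Using the exponential map from each boundary I first put the metrics into their Gaussian normal forms
\begin{equation*}
h_i = dr^2 + g_i(r),
\end{equation*}
where $g_i(r)$ is a smooth one-parameter family of metrics on $X$ with $g_1(0) = \phi^{*}g_2(0) =: g_0$. With the outward normals to $\partial M_1$ and $\partial M_2$ being $+\p_r$ and $-\p_r$ respectively, the second fundamental forms of the two boundaries are $\tfrac12 g'_1(0)$ and $-\tfrac12 g'_2(0)$, so Perelman's hypothesis on normal curvatures translates into the pointwise inequality of symmetric bilinear forms $g'_1(0) > g'_2(0)$ on $TX$. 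The glued $C^0$-metric on $N$ then has the form $dr^2 + g(r)$ with $g|_{r\leq 0} = g_1$ and $g|_{r\geq 0} = g_2$, and only the first $r$-derivative of $g$ fails to be continuous at $r=0$, where it jumps \emph{downwards} in the positive definite sense.

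For each small $\varepsilon>0$ I construct a smooth family $g_\varepsilon(r)$ agreeing with $g_1$ for $r\leq -\varepsilon$ and with $g_2$ for $r\geq \varepsilon$ by monotonically smoothing $g'(r)$ with a mollifier supported in $[-\varepsilon,\varepsilon]$ and then integrating; the candidate metric $\tilde h_\varepsilon := dr^2 + g_\varepsilon(r)$ glues smoothly with $h_1, h_2$ outside $|r|\leq\varepsilon$, and the task reduces to showing $\Ric(\tilde h_\varepsilon)>0$ on the transition region for all sufficiently small $\varepsilon$. The standard formulas for the Ricci tensor of $dr^2 + g(r)$ give, schematically,
\begin{equation*}
\Ric(\p_r,\p_r) = -\tfrac12 \tr_g(g'') + \tfrac14 \tr_g(g'g^{-1}g'),
\end{equation*}
\begin{equation*}
\Ric(Y,Z) = \Ric_{g(r)}(Y,Z) - \tfrac12 g''(Y,Z) + Q(g',g'),
\end{equation*}
together with mixed terms $\Ric(Y,\p_r)$ involving $g$-covariant derivatives of $g'$ along $X$. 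On the transition region $g''_\varepsilon$ has size of order $|g'_1(0)-g'_2(0)|/\varepsilon$, but thanks to the strict inequality $g'_1(0) > g'_2(0)$, the smoothing can be arranged so that $g''_\varepsilon \leq 0$ as a symmetric form throughout $|r|\leq\varepsilon$; the $-\tr(g^{-1}g'')$ and $-g''$ terms then contribute \emph{positively} to both the radial and tangential Ricci curvatures and dominate, uniformly as $\varepsilon\to 0$, the $O(1)$ intrinsic Ricci of $g_\varepsilon(r)$ together with the bounded quadratic $(g')^2$ corrections.

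The main obstacle lies in controlling the mixed curvatures $\Ric(Y,\p_r)$ and the off-diagonal tangential contributions from $Q(g',g')$. These depend on the first $X$-derivatives of $g'_\varepsilon$, and a naive pointwise convex combination of $g_1$ and $g_2$ across the collar makes them blow up at rate $1/\varepsilon$ with indefinite sign, which would wipe out the favorable leading behaviour above. To handle this I would follow Perelman's hint and construct $\tilde h_\varepsilon$ via a bent-collar construction built from a planar profile curve in the two-dimensional half-plane normal to $X$, chosen so that at the infinitesimal level the interpolation varies only in the $r$-direction; this suppresses the horizontal derivatives of $g'_\varepsilon$ and lets the radial estimate dominate uniformly in $\varepsilon$. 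Once this is achieved, for every sufficiently small $\varepsilon$ we obtain a smooth $\Ric>0$ metric on $N$ which agrees with $h_1, h_2$ outside the transition region, giving the desired smooth $\Ric>0$ metric on $W = M_1 \cup_\phi M_2$.
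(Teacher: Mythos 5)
Your reduction to Gaussian normal form $dr^2+g(r)$, the translation of the normal-curvature hypothesis into the pointwise inequality $g_1'(0)>g_2'(0)$, and the central mechanism --- an interpolation whose second radial derivative is negative definite and of order $1/\varepsilon$, so that the $-g''$ contributions to $\Ric$ dominate everything that stays bounded --- coincide exactly with the paper's argument. The paper realises the interpolation by an explicit cubic polynomial matching $g$ and $g'$ at $r=\pm\epsilon$ (followed by a quintic $C^2$-correction and a final $C^2$-small generic smoothing), rather than by mollification. One point in favour of the polynomial route: it agrees \emph{exactly} with $h_1,h_2$ for $|r|\ge\epsilon$, whereas a convolution $\rho_\varepsilon * g'$ perturbs $g'$ even where it is already smooth, so your scheme needs an additional cutoff to recover the ``agrees outside a neighbourhood of the glued boundaries'' clause.

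The genuine gap is precisely in the step you flag as the main obstacle. Your diagnosis is off: for an interpolation of the kind you describe (and for the paper's cubic), the mixed curvatures $\Ric(Y,\partial_r)$ and the quadratic $Q(g',g')$ terms do \emph{not} blow up like $1/\varepsilon$. The mixed Ricci involves horizontal covariant derivatives of $g'_\varepsilon$; since $g_1(0)=\phi^*g_2(0)$ holds identically on $X$, the difference quotient $\bigl(g_2(\varepsilon)-g_1(-\varepsilon)\bigr)/2\varepsilon$ together with all its $X$-derivatives converges as $\varepsilon\to 0$, and $g'_\varepsilon$ itself is trapped between its endpoint values because $g''_\varepsilon\le 0$. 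Hence every curvature component involving at most one $r$-derivative of the metric is bounded independently of $\varepsilon$ --- this is the boundedness argument the paper carries out via the Christoffel-symbol expression for $R^l_{ijk}$, and it is exactly what your proof omits. Instead you defer to an unspecified ``bent-collar / planar profile curve'' construction that is never defined and never shown to suppress the horizontal derivatives; as written, $\Ric(\tilde h_\varepsilon)>0$ is not established, because you have shown the favourable term is large but not that the remaining terms are uniformly bounded. The remedy is not a new construction but the direct estimate just described.
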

\begin{proof}
As above, we will denote by $X$ the hypersurface of $M_1 \cup_\phi
M_2$ along which $M_1$ and $M_2$ are joined, and assume that the
normal parameter through the hypersurface $X$ gives rise to collar
neighbourhoods $\partial M_2 \times [0,\delta)$ in $M_2$ and $\partial
  M_1 \times (-\delta,0]$ in $M_1$, for some $\delta>0.$ Since we will
be working exclusively in a collar neighbourhood of $X$, for
convenience we can simply re-label the metric $(\phi \times
id_{[0,\delta)})^*(h_2|_{\partial M_2 \times [0,\delta)})$ by $h_2,$
    assume that $\partial M_1=\partial M_2$ and that $\phi$ is the
    identity map. Thus from now on we will write $h_1 \cup h_2$ for
    the $C^0$-metric in the theorem, and $M_1 \cup M_2$ for the
    manifold. We will introduce a parameter $t$, normal to $X$,
    running from $M_1$ to $M_2$, such that $t=0$ corresponds to
    $X$. Observe that $M_1 \cup M_2$ has a smooth topological
    structure (though not a smooth metric structure), and that with
    respect to this $t$ is a smooth parameter.

Choose a small parameter $\epsilon>0$. (We will say more about an
appropriate size for $\epsilon$ later.) Our next task is to write down
a new metric on $X \times [-\epsilon,\epsilon]$ which joins with $h_1$
for $t<\epsilon$ and $h_2$ for $t>\epsilon$ to give a $C^1$-metric on
$M_1 \cup M_2$. This new metric will take the form
$dt^2+g(t)$. Denoting by $h_i(t)$, where $i=1$ or $2$, the induced
metric on the hypersurface of constant distance $t$ from $X$, we will
choose $g(t)$ to be the following cubic expression in $t$:
\begin{equation}\label{eq:g(t)}
\begin{array}{lcl}
  g(t)&=& \displaystyle
  \frac{t+\epsilon}{2\epsilon}h_2(\epsilon)
  - \frac{t-\epsilon}{2\epsilon}h_1(-\epsilon)
   + \frac{(t-\epsilon)^2(t+\epsilon)}{4\epsilon^2}
 \left[h_1'(-\epsilon)-\frac{1}{2\epsilon}[h_2(\epsilon)-h_1(-\epsilon)]\right]
\\
  \\
  & & \displaystyle + \frac{(t+\epsilon)^2(t-\epsilon)}{4\epsilon^2}
 \left[h_2'(\epsilon)-\frac{1}{2\epsilon}[h_2(\epsilon)-h_1(-\epsilon)]\right].
\end{array}
\end{equation}
\begin{lemma}\label{lemma:cubic}
 Assume that the metrics $h_1$, $h_2$ satisfy the hypothesis from
 Theorem \ref{Perelman}. Then there exists $\epsilon>0$ such that
\begin{enumerate}
\item[(i)] with $g(t)$ as in {\rm (\ref{eq:g(t)})}, the metric
  $dt^2+g(t)$ is smooth if $t\neq 0$ and $C^1$ at $t=0$;

\item[(ii)] $\Ric_{dt^2+g(t)}>0$.
\end{enumerate}
\end{lemma}
\begin{proof}[{\it Proof of Lemma \ref{lemma:cubic}.}]
First, we find the $t$-derivative of this metric. A straightforward
calculation gives
\begin{equation*}
\begin{array}{lcl}
  g'(t) &=& \displaystyle
  \frac{1}{2\epsilon}
       [h_2(\epsilon)-h_1(-\epsilon)]
         +
  \frac{2(t^2-\epsilon^2)+(t-\epsilon)^2}{4\epsilon^2}
  \left[h_1'(-\epsilon)- \frac{1}{2\epsilon}
       [h_2(\epsilon)-h_1(-\epsilon)]\right]
  \\
  \\
  && \displaystyle +
  \frac{2(t^2-\epsilon^2)+(t+\epsilon)^2}{4\epsilon^2}
  \left[h_2'(\epsilon)- \frac{1}{2\epsilon}
    [h_2(\epsilon)-h_1(-\epsilon)]\right].
\end{array}
\end{equation*}
It is now an easy exercise to check that the metric $g$ forms a $C^1$
join with the $h_i$ at $t=\pm\epsilon$. (The metric $g$ is of course
smooth.)

Our next task is to investigate the curvature properties of
$dt^2+g(t)$.
We begin by assuming that the cubic expression for $g(t)$ above holds
in an open neighbourhood containing
$[-\epsilon,\epsilon]\times X$. Motivated by the fact
that curvature is a second derivative phenomenon, an easy calculation
shows that
\begin{equation*}
g''(t)=\frac{1}{4\epsilon^2}(6t-2\epsilon)\Bigl[h_1'(-\epsilon)- 
\frac{1}{2\epsilon}[h_2(\epsilon)-h_1(-\epsilon)]\Bigr] + \frac{1}{4\epsilon^2}
  (6t+2\epsilon)\Bigl[h_2'(\epsilon)- \frac{1}{2\epsilon}[h_2(\epsilon)-h_1(-\epsilon)]\Bigr]. 
\end{equation*}
We will investigate the limiting behaviour of $g''(\pm\epsilon)$ as
$\epsilon \to 0.$ At $t=\epsilon$ we have
\begin{equation}\label{eq:second-der}
  g''(\epsilon)=\frac{1}{\epsilon}\left[h_1'(-\epsilon)+2h'_2(\epsilon)-
    \frac{3}{2}\frac{h_2(\epsilon)-h_1(-\epsilon)}{\epsilon}\right].
\end{equation}
Consider the term $(h_2(\epsilon)-h_1(-\epsilon))/\epsilon$ in
(\ref{eq:second-der}).  In the limit $\epsilon \to 0$ we see by
l'H\^opital's rule that the value of the limit is $h_1'(0)+h_2'(0)$.
Clearly, the overall limit of the bracketed term in
(\ref{eq:second-der}) is
\begin{equation*}
h_1'(0)+2h_2'(0)-\frac{3}{2}(h_1'(0)+h_2'(0))=\frac{1}{2}(h_2'(0)-h_1'(0)).
\end{equation*}
A similar calculation shows that the corresponding term in
$\displaystyle \lim_{\epsilon \to 0} g''(-\epsilon)$ yields exactly
the same expression.

Next observe that $g''(t)$ has a linear dependence on $t$. Thus if
$g''(-\epsilon)$ and $g''(\epsilon)$ have the same sign, then this
sign persists for all $t \in [-\epsilon,\epsilon]$. We will show that
under the Perelman normal curvature assumption, the sign of both is
negative provided $\epsilon$ is chosen sufficiently small.\footnote{
  \ Notice what we have used so far. For the $C^1$ cubic expression we
  require no assumptions. In order to obtain the limiting formula for
  the second derivative we only need that the original metric on the
  union is continuous at $t=0$.}

Assume that $\epsilon$ is chosen so small that the topological product
structure in a neighbourhood of $X$ extends over $t \in
[-2\epsilon,2\epsilon]$. Let $u$ be a fixed vector tangent to $X$ at
some point $x_0$. As $u$ is independent of the metric, given the
product structure around $X$, we can consider the `same' vector for
nearby values of $t$, i.e. we obtain a local vector field $u$ along
the line ${(t,x_0)}$. Define the normal curvature function $k(u)$ to
be $k(u)=\langle \nabla_u \partial_t , u \rangle$, which is the normal
curvature for the vector $u$ of the hypersurface $t=\hbox{constant}$,
with $\partial/\partial t$ providing the normal direction.  Note that
we do not insist that $u$ is unit with respect to any metric. But now
observe that we can rearrange this definition to give
$k(u)=\frac{1}{2} \frac{\partial}{\partial t} \langle u,u \rangle$,
which is just $\frac{1}{2} g'(t)(u,u)$. Differentiating with respect
to $t$ we obtain $k'(u)=\frac{1}{2} g''(t)(u,u).$

The difference of the normal curvatures corresponding to $u$ across
$X$ can be viewed as
\begin{equation}\label{difference}
\frac{1}{2}\lim_{\epsilon \to 0}\Bigl(
g'(\epsilon)(u,u)-g'(-\epsilon)(u,u)\Bigr)=\frac{1}{2}(h_1'(0)-h_2'(0)).
\end{equation}
Denoting the normal curvatures at $\partial M_1$ and $\partial M_2$ with respect to the outward normals by $k_1,k_2$, it is straightforward to see that
\begin{equation*}
\begin{array}{lcl}
k_1(u)=&\lim_{t \to 0^-} k(u)&=\frac{1}{2}\lim_{t \to 0^-}g'(t)(u,u); \cr
k_2(u)=&-\lim_{t \to 0^+} k(u)&=-\frac{1}{2}\lim_{t \to 0^+}g'(t)(u,u). \cr
\end{array}
\end{equation*}
Thus the difference of normal curvatures \ref{difference} is equal to
$k_1(u)+k_2(u).$ Now the Perelman normal curvature assumption is that
$k_1(u)>-k_2(u),$ which means that the difference of normal curvatures
is positive, and hence $\displaystyle \lim_{\epsilon \to 0} \epsilon
g''(\pm \epsilon)=\frac{1}{2}(h_2'(0)-h_1'(0))<0.$ Therefore by
choosing $\epsilon$ sufficiently small, we can bound $g''(t)(u,u)$ as
\begin{equation}\label{eq:constant}
g''(t)(u,u) < -A \cdot |u|,
\end{equation}
where $-A$ is an arbitrarily large negative constant and the norm of
$u$ is taken with respect to $h_1$ or $h_2$ say, as these are common
on $X$.  In turn this means that $k'(u)$ can similarly be bounded
above.

The relevance of $k'(u)$ is that it can be re-written in terms of the
curvature tensor applied to $u$ and
$\partial_t:=\frac{\partial}{\partial t}$, and we can use the
arbitrarily negative feature of $k'(u)$ to produce an arbitrarily
large positive lower bound for $R(\partial_t,u,u,\partial_t)$. In
detail we have
\begin{equation*}
\begin{array}{lcl}
  k'(u) &=&
  \partial_t \langle \nabla_u \partial_t,u\rangle 
  = \langle
  \nabla_{\partial_t}\nabla_u \partial_t,u \rangle+\langle \nabla_u
  \partial_t,\nabla_{\partial_t} u \rangle \\
  \\
  & =& \langle \nabla_t
  \nabla_u \partial_t,u \rangle+|S(u)|^2 
\end{array}
\end{equation*}
where $S(u)$ denotes the shape operator of the hypersurfaces given by
constant values of $t$, and where we have used the fact that
$\nabla_{\partial t} u=\nabla_u \partial t$ since $[\partial
t,u]\equiv 0$. On the other hand we have
\begin{equation*}
\begin{array}{lcl}
  R(\partial_t,u,u,\partial_t)& =&-R(\partial_t,u,\partial_t,u)
  \\ \\ &=& -\Bigl[\langle \nabla_{\partial_t} \nabla_u \partial_t,u
    \rangle-\langle \nabla_u\nabla_{\partial_t} \partial_t,u \rangle
    \Bigl] \\ \\ & =&-\langle \nabla_{\partial_t} \nabla_u
  \partial_t,u \rangle
\end{array}
\end{equation*}
as $\nabla_{\partial_t} \partial_t \equiv 0$. Thus we
conclude that
\begin{equation}\label{eq:shape-op}
R(\partial_t,u,u,\partial_t)=-k'(u)+|S(u)|^2.
\end{equation}
In particular, since $k'(u)= \frac{1}{2} g''(u,u)$, and $g''(t)(u,u)<
-A |u|$, as in (\ref{eq:constant}), for small enough $\epsilon>0$, we
can bound $R(\partial_t,u,u,\partial_t)$ below by any given positive
constant.

The observation is now that any Ricci curvature expression must
contain this large positive term, and we therefore get positive Ricci
curvature for the metric $dt^2+g(t)$, provided we show that other
curvature tensor expressions remain bounded. It is easily checked that
this boundedness reduces to showing that $\|R(u_i,u_j)u_k)\|$ is
bounded above by some constant independent of $\epsilon$ for all
vectors $u_i,u_j,u_k$ tangent to $X$ which are unit with respect to
say $h_1(0)=h_2(0).$

With the above curvature expression (\ref{eq:shape-op}) in mind,
consider the derivatives of the metric $g(t)$ in directions orthogonal
to $t$. The quantities
$h_1(-\epsilon),h_1'(-\epsilon),h_2(\epsilon),h_2'(\epsilon)$ and
their derivatives can clearly be bounded independent of $\epsilon$. We
also know that the other terms involving $\epsilon$:
\begin{equation*}
  \frac{t\pm\epsilon}{2\epsilon}  , \ \ \ \
  \frac{1}{2\epsilon}(h_2(\epsilon)-h_1(-\epsilon)) , \ \ \ \
  \frac{(t\pm
    \epsilon)^2(t \mp \epsilon)}{4\epsilon^2}
\end{equation*}
all remain bounded for $t \in [-\epsilon,\epsilon]$ as $\epsilon \to
0$. Therefore the derivatives of $g(t)$ orthogonal to $t$ must stay
bounded independent of $\epsilon$.

We also claim that the first derivative of $g(t)$ with respect to $t$
is bounded independently of $\epsilon$. This follows from the above
calculations involving $g''(\pm \epsilon)$, see
(\ref{eq:second-der}).  We showed that for $\epsilon$ sufficiently
small, the sign of $g''(t)(u,u)$ is negative, from which we see that
the values of $g'(t)(u,u)$ must lie between those at $t=\pm \epsilon,$
and hence are bounded independent of $\epsilon.$
We also notice that boundedness can then
also be deduced for $g'(t)(u,v)$ via the polarization formula for
inner products.

We conclude that the norm $ \|R(u_i,u_j)u_k\|$ is
bounded for all $t \in [-\epsilon,\epsilon]$ independent of $\epsilon$
provided the curvature $R(u_i,u_j)u_k$ does not depend on the second
derivative of the metric with respect to $t$. Without loss of
generality assume that $u_i,u_j,u_k$ are coordinate vector fields for
some coordinate system on $X$ extended to a coordinate system in a
neighbourhood of $X$ by the parameter $t$. The relevant expression for
the components of $R(u_i,u_j)u_k$ in terms of Christoffel symbols is
\begin{equation}\label{eq:curvature}
R^l_{ijk}=\partial_i\Gamma^l_{jk}-\partial_j\Gamma^l_{ik}+
\sum_{m=1}^n(\Gamma^m_{jk}\Gamma^l_{ im}-\Gamma^m_{ik}\Gamma^l_{jm}),
\end{equation}
where $l$ runs over all possible subscripts including $t$. Since the
Christoffel symbols in (\ref{eq:curvature}) have at most one
derivative with respect to $t$, we obtain the desired boundedness
property of $\|R(u_i,u_j)u_k)\|$.  Thus we fix small $\epsilon>0$ so
that the metric $g(t)$ has positive Ricci curvature and is smooth if
$t\neq0$, and $C^1$ if $t=0$. This proves Lemma
\ref{lemma:cubic}.\end{proof}
\vspace*{-2mm}

\noindent{\it Proof of Theorem \ref{Perelman} continued.}
Our next goal is to show how
to effect a $C^2$-smoothing of $g(t)$ in some
$\tau$-neighbourhoods of $t=\pm \epsilon,$ $\tau << \epsilon.$ As we
will make a very general construction, it will be convenient to
locally re-parameterize, and work in an interval $[-\tau,\tau].$

We start with an arbitrary $C^1$-function $f(t)$, which is smooth away
from $t=0$, and is defined in some open set of the real line
containing $[-\tau,\tau]$. Given such function $f(t)$, we will replace
$f(t)$ for $t \in [-\tau,\tau]$ by a quintic polynomial $p(t)$ which
will agree to second order with $f(t)$ at $t=\pm \tau.$ By applying
this idea to Riemannian metrics, we can create the desired
$C^2$-metric by quintic interpolation, in exactly the same way that we
created a $C^1$-metric using a cubic interpolation.

Let $p(t)=\sum_{n=0}^5 c_n t^n,$ and suppose that $f(\tau)=a_0,$
$f(-\tau)=b_0,$ $f'(\tau)=a_1,$ $f'(-\tau)=b_1,$ $f''(\tau)=a_2,$
$f''(-\tau)=b_2.$ Assuming that $p^{(i)}(\pm\tau)=f^{(i)}(\pm\tau)$
for $i=0,1,2$ yields a $(5 \times 5)$-linear system with the $c_n$ as
the unknowns and the $a_i,b_i$ as coefficients. Solving this system
(using Maple) shows that the polynomial $p(t)$ is uniquely determined
by the above requirements, and is equal to
\begin{equation}\label{eq:polyn}
\begin{split}
  p(t)=&\ \ \ \ \frac{\tau^2(a_2-b_2)-3\tau(a_1+b_1)+
    3(a_0-b_0)}{16\tau^5}t^5-\frac{-\tau(a_2+b_2)+(a_1-b_1)}{16\tau^3}t^4 \\
  &-\frac{\tau^2(a_2-b_2)-5\tau(a_1+b_1)+5(a_0-b_0)}{8\tau^3}t^3+
  \frac{-\tau(a_2+b_2)+3(a_1-b_1)}{8\tau}t^2 \\
  &+\frac{\tau^2(a_2-b_2)-7\tau(a_1+b_1)+15(a_0-b_0)}{16\tau}t+
  \frac{\tau^2(a_2+b_2)-5\tau(a_1-b_1)}{16}+\frac{a_0+b_0}{2}.
\end{split}
\end{equation}
With an eye towards curvature considerations when this quintic
interpolation has been applied to Riemannian metrics, consider next
the effect on $p(t)$ (for $t \in [-\tau,\tau]$) of letting $\tau \to
0.$ As this limit is approached, the term involving $t^5$ in the above
expression for $p(t)$ approaches
$\frac{3}{16}(a_0-b_0)\frac{t^5}{\tau^5},$ the $t^3$ term approaches
$-\frac{5}{8}(a_0-b_0)\frac{t^3}{\tau^3}$, and the first order term in
$t$ contributes $\frac{15}{16}(a_0-b_0)\frac{t}{\tau}$.

Recall that $|t|\leq |\tau|$, hence the limits of the
  $t^5$, $t^3$, and $t$- terms are bounded by
\begin{equation*} 
\begin{array}{c}
  \frac{3}{16}|a_0-b_0|, \ \ \ \frac{5}{8}|a_0-b_0|, \ \ \ \mbox{and} \ \ \
  \frac{15}{16}|a_0-b_0|
\end{array}
\end{equation*}
respectively. Since $|t|\leq |\tau|$, the degree four and two terms in
$t$ contribute nothing in the limit, and the zeroth order term yields
$\frac{a_0+b_0}{2}$.  In our case we can say more, however. Clearly,
the coefficients $a_i$ and $b_i$ are functions of $\tau$, i.e.,
$a_i=a_i(\tau)$ and $b_i=b_i(\tau)$ for $i=0,1,2,$ and since $f(t)$ is
assumed $C^1$ at $t=0$ we see that $\lim_{\tau \to 0}
a_j(\tau)=\lim_{\tau \to 0} b_j(\tau)=f^{(j)}(0)$ for $j=0,1.$ Thus we
conclude that for $\tau$ sufficiently small, the polynomial $p(t)$ can
$C^0$-approximate the constant function with value
$(a_0+b_0)/2=a_0=b_0=f(0)$ over the interval $[-\tau,\tau]$ to within
any desired degree.

Applying the same analysis to $p'(t)$ shows that for $\tau$
sufficiently small, $p'(t)$ $C^0$-approximates the constant function
with value $(a_1+b_1)/2=a_1=b_1=f'(0)$ over the interval
$[-\tau,\tau]$ to within any desired degree. In other words, by
choosing $\tau$ sufficiently small, $p(t)$ will $C^1$-approximate
$f(t)$ over $[-\tau,\tau]$ to within any desired accuracy.

Finally, we must consider the behaviour of $p''(t).$ Analogous
arguments to the above show that for $\tau$ sufficiently small,
$p''(t)$ can be $C^0$-approximated over $[-\tau,\tau]$ to within any
desired degree of accuracy by the cubic
\begin{equation}
\begin{array}{c}
  \frac{5}{4}(a_2-b_2)\frac{t^3}{\tau^3}-
  \frac{3}{4}(a_2-b_2)\frac{t}{\tau}+\frac{1}{2}(a_2+b_2) \ .
\end{array}
\end{equation}
To understand the behaviour of this cubic, it clearly suffices to
examine the function $q(t)=\frac{5}{\tau^3}t^3-\frac{3}{\tau}t$ over
$t \in [-\tau,\tau].$ An elementary calculation shows that maximum and
minimum values taken by $q(t)$ over $ [-\tau,\tau]$ are $q(\tau)=2$
respectively $q(-\tau)=-2.$ This function is depicted in
Fig. \ref{C2interpolate} below.
\begin{figure}[!htbp]
\vspace{3.5cm}
\hspace{0cm}
\begin{picture}(0,0)
\includegraphics{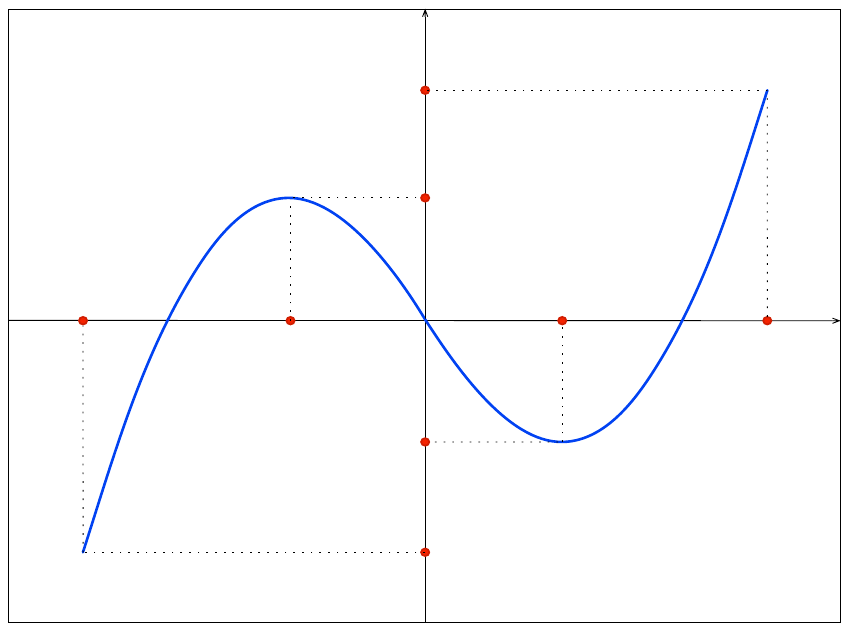}%
\end{picture}
\setlength{\unitlength}{3947sp}
\begingroup\makeatletter\ifx\SetFigFont\undefined%
\gdef\SetFigFont#1#2#3#4#5{%
  \reset@font\fontsize{#1}{#2pt}%
  \fontfamily{#3}\fontseries{#4}\fontshape{#5}%
  \selectfont}%
\fi\endgroup%
\begin{picture}(5079,1559)(1902,-7227)
\put(2300,-5600){\makebox(0,0)[lb]{\smash{{\SetFigFont{10}{8}{\rmdefault}{\mddefault}{\updefault}{\color[rgb]{1,0,0}$-\tau$}%
}}}}
\put(3300,-5650){\makebox(0,0)[lb]{\smash{{\SetFigFont{10}{8}{\rmdefault}{\mddefault}{\updefault}{\color[rgb]{1,0,0}$-\frac{\tau}{\sqrt 5}$}%
}}}}

\put(4075,-4430){\makebox(0,0)[lb]{\smash{{\SetFigFont{10}{8}{\rmdefault}{\mddefault}{\updefault}{\color[rgb]{1,0,0}$2$}%
}}}}
\put(4250,-4970){\makebox(0,0)[lb]{\smash{{\SetFigFont{10}{8}{\rmdefault}{\mddefault}{\updefault}{\color[rgb]{1,0,0}$\frac{2}{\sqrt{5}}$}%
}}}}

\put(3800,-6120){\makebox(0,0)[lb]{\smash{{\SetFigFont{10}{8}{\rmdefault}{\mddefault}{\updefault}{\color[rgb]{1,0,0}$-\frac{2}{\sqrt{5}}$}%
}}}}
\put(4250,-6680){\makebox(0,0)[lb]{\smash{{\SetFigFont{10}{8}{\rmdefault}{\mddefault}{\updefault}{\color[rgb]{1,0,0}$-2$}%
}}}}

\put(5730,-5600){\makebox(0,0)[lb]{\smash{{\SetFigFont{10}{8}{\rmdefault}{\mddefault}{\updefault}{\color[rgb]{1,0,0}$\tau$}%
}}}}
\put(4650,-5650){\makebox(0,0)[lb]{\smash{{\SetFigFont{10}{8}{\rmdefault}{\mddefault}{\updefault}{\color[rgb]{1,0,0}$\frac{\tau}{\sqrt 5}$}%
}}}}

\end{picture}%
\caption{The graph of the function $q(t)$}
\label{C2interpolate}
\end{figure}   

\noindent
Thus the values of $q(t)$ for all other $t$ in this interval lie
between the endpoint values. It follows immediately that the same is
true for $p''(t)$ (with respect to its endpoint values).
\begin{lemma}\label{lem:c-2-approximation}
Let $g(t)$ be the $C^1$-metric given by (\ref{eq:g(t)}), where
$\epsilon>0$ is chosen so that $\Ric_{dt^2+g(t)}>0$. Then there exists
small enough $\tau << \epsilon$ and a $C^2$-metric $dt^2+\tilde g(t)$
such that
\begin{enumerate}
\item[(i)] $\Ric_{dt^2+\tilde g(t)}>0$;
\item[(ii)] $\tilde g(t)= g(t)$ if $|t|\geq \tau$;
\item[(iii)] the metrics $dt^2+\tilde g(t)$ and $dt^2+g(t)$ are
  arbitrarily $C^1$-close on $X\times [-\epsilon,\epsilon]$.
\end{enumerate}
\end{lemma}
\begin{proof} [{\it Proof of Lemma \ref{lem:c-2-approximation}.}]
  Indeed, we choose a coordinate system on $X\times [-\epsilon,
    \epsilon]$, so that the metric $g(t)$ is given by its components
  $g_{ij}(t)$, which are smooth when $t\neq 0$ and $C^1$-functions
  when $t=0$.  Then we use the polynomial (\ref{eq:polyn}) for each
  function $g_{ij}(t)$ to obtain functions $\tilde g_{ij}(t)$.
  Clearly by choosing $\tau$ sufficiently small we can bound the
  variation in the metric components $\tilde g_{ij}(t)$ and their
  first derivatives by an arbitrarily small constant, whereas the
  second derivatives vary between their values at the endpoints. As
  curvature is a $C^2$ phenomenon which depends linearly on the second
  derivatives of the metric, we see that any open convex curvature
  condition satisfied by both `halves' of the $C^1$ metric
  (i.e. either side of $t=\pm\epsilon$) will continue to be satisfied
  by the resulting $C^2$ metric, as $\tau$ is chosen to be
  sufficiently small. Since the positivity of the Ricci curvature is
  an open and convex condition we deduce that the our $C^2$ metric
  $dt^2+\tilde g(t)$ will have positive Ricci curvature if $\tau$ is
  sufficiently small.
\end{proof}
\noindent{\it Proof of Theorem \ref{Perelman} continued.} It remains
to smooth the metric from $C^2$ to $C^\infty.$ By general smoothing
theory for functions, we know that the set of $C^2$ functions on a
smooth manifold is dense in the space of $C^\infty$ functions (see for
Theorem 2.6 of \cite{Hirsch}). Thus we can make a $C^2$-arbitrarily
small adjustment to our $C^2$ metric to render it smooth, and in so
doing ensure the positivity of the Ricci curvature is preserved. This
proves Theorem \ref{Perelman}.
\end{proof}
Theorem \ref{Perelman} immediately gives us the following corollary,
which will play a key role in section 4.
\begin{cor}\label{pcor}
  The conclusion of Theorem \ref{Perelman} holds if the normal
  curvatures at both boundaries (with respect to the outward normals)
  are all positive.
\end{cor}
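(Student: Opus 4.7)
The plan is essentially to observe that Corollary \ref{pcor} is an immediate consequence of Theorem \ref{Perelman} once we reinterpret its hypothesis. Recall from the proof of Theorem \ref{Perelman} that the normal curvature assumption, for each tangent vector $u$ to the common boundary $X$, takes the form
\begin{equation*}
k_1(u) > -k_2(u),
\end{equation*}
where $k_1$ and $k_2$ denote the normal curvatures of $\partial M_1$ and $\partial M_2$ with respect to the \emph{outward} normals (both measured in the direction pointing out of the corresponding $M_i$). This is equivalent to the single inequality $k_1(u) + k_2(u) > 0$ holding for every nonzero tangent vector $u$ to $X$.

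My plan is then a one-line verification: if by hypothesis the principal curvatures at $\partial M_1$ and $\partial M_2$ with respect to the outward normals are all positive, then in particular $k_1(u) > 0$ and $k_2(u) > 0$ for every nonzero $u \in T_x X$ (since the principal curvatures are the eigenvalues of the shape operator and the normal curvature $k_i(u)$ is a positive combination of these eigenvalues against the coefficients of $u$ in an orthonormal eigenbasis). Therefore $k_1(u) + k_2(u) > 0$ trivially, so the Perelman normal curvature condition is fulfilled. Applying Theorem \ref{Perelman} directly yields the desired smoothing within positive Ricci curvature.

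There is no real obstacle here; the only tiny subtlety worth flagging is the sign convention, namely that ``outward'' is taken with respect to each $M_i$ separately, so that after the gluing the two outward normals point in \emph{opposite} directions along the parameter $t$ introduced in the proof of Theorem \ref{Perelman}. This is precisely why the Perelman hypothesis reads $k_1 > -k_2$ rather than $k_1 > k_2$, and it is what makes the positivity assumption in Corollary \ref{pcor} a (strictly stronger) sufficient condition rather than a reformulation. Once this sign bookkeeping is acknowledged, the corollary follows immediately.
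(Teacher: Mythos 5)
Your proof is correct and matches the paper's reasoning: the paper presents this corollary as an immediate consequence of Theorem \ref{Perelman}, since $k_1(u)>0$ and $k_2(u)>0$ trivially give $k_1(u)>-k_2(u)$. Your extra remarks on the sign convention for the outward normals are accurate and consistent with the computation in the proof of Theorem \ref{Perelman}.
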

\subsection{A family version of Perelman's Theorem}
We will also need a family version of Theorem \ref{Perelman}, which
allows us to perform simulataneous Ricci positive smoothings on the
fibres of a bundle.
\begin{theorem}\label{family}
Let $\pi_i:E_i \to B$, $i=1,2$ be smooth compact fibre bundles with
fibre $M_i,$ where $\partial M_i \neq \emptyset.$ Suppose that each of
these bundles is equipped with a smoothly varying family of fibrewise
Ricci positive metrics $\{h_i(b)\}_{b \in B},$ and that with respect
to these metrics, there is a smoothly varying family of fibrewise
isometries $\phi:=\{\phi_b\}_{b \in B}$ for the boundary bundles
$\partial \pi_i:\partial E_i \to B$ (with fibre $\partial M_i),$ that
is, $\phi_b: \partial \pi_1^{-1}(b) \cong \partial \pi_2^{-1}(b)$ for
each $b \in B.$ Then provided the normal curvatures of
$h_1(b)|_{\partial \pi_1^{-1}(b)}$ with respect to the outward normal
are always greater than the negatives of corresponding normal
curvatures of $h_2(b)|_{\partial \pi_2^{-1}(b)}$ with respect to its
outward normal, the fibrewise $C^{0}$-metric
$h:=\{h_1(b)\cup_{\phi(b)} h_2(b)\}_{b \in B}$ on $E_1\cup_{\phi}E_2$
can be smoothed within fibrewise positive Ricci curvature in such a
way that the resulting metric agrees with the original ouside a
neighbourhood of the glued boundaries.
\end{theorem}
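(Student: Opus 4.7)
The plan is to run each step of the proof of Theorem \ref{Perelman} fibrewise, exploiting both the continuous dependence of the explicit interpolation formulas on the boundary data and the compactness of $B$ to obtain uniform choices of the smoothing parameters.

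First, I would assemble the glued total space $E := E_1 \cup_\phi E_2$. Using a smoothly varying family of fibrewise Riemannian metrics to define inward and outward normal geodesic coordinates off $\partial E_1$ and $\partial E_2$, and exploiting compactness of $B$, one obtains a uniform $\delta>0$ and smoothly varying collar neighbourhoods $\partial\pi_i^{-1}(b)\times[0,\delta)$ for each $b\in B$. The fibrewise isometries $\phi_b$ then patch these collars together, giving $E\to B$ a smooth bundle structure in which the hypersurface bundle $X\to B$ (with $X_b$ the glued boundary) has a bicollar $X_b\times(-\delta,\delta)$ varying smoothly with $b$. The normal parameter $t$ is intrinsic to this structure and global on $B$.

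Second, I would apply the cubic formula \eqref{eq:g(t)} fibrewise. Setting
\begin{equation*}
g(t,b) \;=\; \text{the cubic of \eqref{eq:g(t)} built from the jets of } h_1(b) \text{ and } h_2(b) \text{ at } t=\pm\epsilon,
\end{equation*}
the smooth dependence of $h_i(b)$ on $b$ and of the formula \eqref{eq:g(t)} on its coefficients guarantees that $g(t,b)$ varies smoothly in $(t,b)$, is smooth in $t$ away from $t=0$, and is $C^1$ in $t$ at $t=0$. The Perelman normal curvature inequality $k_1(u,b)+k_2(u,b)>0$ holds pointwise on the unit tangent bundle of $X$, which is compact since $B$ and the fibres are; by continuity, it is bounded below by a positive constant. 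Repeating the analysis of Lemma \ref{lemma:cubic} now gives, via \eqref{eq:second-der} and \eqref{eq:shape-op}, a uniform $\epsilon>0$ for which $\Ric_{dt^2+g(t,b)}>0$ for every $(t,b)\in[-\epsilon,\epsilon]\times B$. The boundedness of the other curvature components required in Lemma \ref{lemma:cubic} is uniform in $b$ for the same compactness reason.

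Third, I would apply the quintic interpolation \eqref{eq:polyn} fibrewise near $t=\pm\epsilon$. Because \eqref{eq:polyn} depends polynomially on the $C^2$-jet of $g(t,b)$ at $t=\pm\tau$ (for $\tau\ll\epsilon$), the resulting $\tilde g(t,b)$ is smooth in $b$. The $C^1$-closeness assertion of Lemma \ref{lem:c-2-approximation} is again a pointwise statement whose constants can be controlled uniformly over the compact base $B$, so a single $\tau>0$ suffices to maintain fibrewise positive Ricci curvature. Since positive Ricci curvature is $C^2$-open and convex, a final $C^2$-small smoothing of the metric components in local product charts of $E$ (combined via a partition of unity subordinate to a finite trivialising cover of $B$ and of the fibres) yields a fibrewise $C^\infty$ metric with positive Ricci curvature on every fibre, agreeing with $h_1(b)\cup_{\phi_b}h_2(b)$ outside a neighbourhood of $X$.

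The principal obstacle I anticipate is ensuring that the two smallness parameters $\epsilon$ and $\tau$ can be chosen uniformly in $b$. Both choices in the single-manifold proof are made by letting a parameter shrink until a strict inequality holds; in the family setting one must verify that the relevant inequalities hold with constants continuous in $b$ and then invoke compactness of $B$. Once this uniformity is in place, the fibrewise construction is purely formulaic and inherits smooth dependence on $b$ from the smooth dependence of the input data $h_1(b), h_2(b), \phi_b$.
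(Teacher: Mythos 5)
Your proposal is correct and follows essentially the same route as the paper: run the explicit cubic and quintic interpolations of Theorem \ref{Perelman} fibrewise, use compactness of $B$ to make the parameters $\epsilon$ and $\tau$ uniform, note that the polynomial formulas depend smoothly on the fibre metrics, and finish with a global $C^2$-small smoothing to reach $C^\infty$ while preserving fibrewise positive Ricci curvature. The extra detail you supply on the uniform lower bound for the normal curvature inequality over the compact unit tangent bundle is a welcome elaboration of what the paper leaves implicit.
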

\begin{proof}
The key observation is that in the proof of Theorem \ref{Perelman},
the $C^2$-smoothing constructed there only depends on the metrics
together with two small positive parameters $\epsilon$ and $\tau.$ Now
suppose we have a smooth variation of the metrics on $M_1$ and $M_2,$
which nevertheless always satisfies the requirements of Theorem
\ref{Perelman}. It is clear that $\epsilon$, the first chosen
parameter in the construction which together with the given metrics
determines the $C^1$ smoothing, can be chosen to vary continuously
with the metric. Similarly the second parameter, $\tau,$ needed to
construct the $C^2$ smoothing, can be chosen to vary continuously with
the metrics and $\epsilon.$

In the situation of the current Theorem, it follows from the above
observations and the compactness of $B$ that we can make uniform
choices for $\epsilon$ and $\tau$ which will work for all fibres in
our bundles $E_1$ and $E_2.$ Having made these choices, the $C^2$
metric smoothing performed after gluing each pair of fibres is then
completely determined by the metrics on these fibres. Moreover, since
this is a smoothing by polynomials, it follows trivially that the
resulting metrics will vary smoothly from fibre to fibre.

Finally, the same argument as employed at the end of the proof of
Theorem \ref{Perelman} shows that our fibrewise Ricci positive metric
on $E_1 \cup_\phi E_2$ can be smoothed to class $C^\infty$ within
fibrewise positive Ricci curvature. (We could always extend our
fibrewise $C^2$ metric to a global $C^2$ metric for which the
intrinsic fibre metrics have positive Ricci curvature. This can then
be globally smoothed by a $C^2$-arbitrarily small deformation,
preserving the intrinsic positive Ricci curvature on the fibres, then
restricting to the fibres yields the desired smooth fibrewise metric.)
\end{proof}

\section{Hatcher bundles}\label{Hatch}
\subsection{Goette's Theorem}\label{subsec:goette}
The aim of this section is to review the construction and properties
of certain smooth $S^{n}$-bundles over $S^{i}$ known as ``Hatcher
bundles''. In short, a Hatcher bundle $E_{\lambda}\to S^i$ is a smooth
$S^n$-bundle determined by an element $\lambda \in \ker J$, where $J:
\pi_{i-1} O(p)\longrightarrow \pi_{i-1+p}S^{p}$ is the
$J$-homomorphism, and where $0<p<n$. A Hatcher bundle $E_{\lambda}\to
S^i$ has structure group $\Diff_{x_0}(S^n)$ and thus is classified by
some map $f_{\lambda}: S^i \to \BDiff_{x_0}(S^n)$. We then say that a
Hatcher bundle $E_{\lambda}\to S^i$ represents the element
$[f_{\lambda}]\in \pi_i \BDiff_{x_0}(S^n)$; below we identify
$\BDiff_{x_0}(S^n)=\bM_{x_0}(S^n)$.

In the introduction we stated a theorem of Farrell and Hsiang (Theorem \ref{thm:farrel-hsiang}) 
concerning the rational homotopy groups
$\pi_i \bM_{x_0}(S^n)\otimes \Q$. In fact, each element of
those groups may be represented by a Hatcher bundle.
\begin{theorem}\label{thm:goette}{\rm (See \cite[Section 5]{GO})}
Suppose that $n$ and $k$ satisfy the hypotheses of Theorem
\ref{thm:farrel-hsiang}, so $\pi_{4k} \bM_{x_0}(S^n)\otimes \Q\cong\Q$
for such $k$ and $n$.  Then for each element $[f]\in\pi_{4k}
\bM_{x_0}(S^n)\otimes \Q$, there is an integer $p$ with $0<p<n,$ and
an element $\lambda\in\ker J$, where $J: \pi_{i-1} O(p)\longrightarrow
\pi_{i-1+p}S^{p}$ is the $J$-homomorphism, such that the Hatcher
bundle $E_{\lambda}$ represents the element $[f]$.
\end{theorem}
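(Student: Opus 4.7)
The plan is to realize the one-dimensional rational vector space $\pi_{4k}\bM_{x_0}(S^n) \otimes \Q \cong \Q$ by an explicit geometric construction. Since the target is one-dimensional over $\Q$, it suffices to exhibit a single Hatcher bundle $E_\lambda$ whose class in $\pi_{4k}\bM_{x_0}(S^n)$ has infinite order; every other rational element is then a $\Q$-multiple of this class, which corresponds to scaling $\lambda$ in the free part of $\ker J$.

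First, I would unwind the Farrell--Hsiang identification. Their computation identifies $\pi_{4k}\BDiff_{x_0}(S^n) \otimes \Q$, in the stable range of Theorem \ref{thm:farrel-hsiang}, with a summand of the rationalized homotopy of the smooth Whitney space, which via Waldhausen's splitting $A(\ast) \simeq K(\Z) \times \mathrm{Wh}^{\mathrm{Diff}}(\ast)$ and Borel's computation of $K_*(\Z) \otimes \Q$ becomes the one-dimensional group $K_{4k+1}(\Z) \otimes \Q$. The plan is to pull the Borel generator of this group back through the chain of isomorphisms and identify a preimage that is geometrically realizable.

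Second, I would construct a candidate Hatcher bundle. Given a nonzero $\lambda$ in the free part of $\ker J \subset \pi_{4k-1}O(p)$ for a suitable $p$ with $0<p<n$, Hatcher's clutching construction produces a family of diffeomorphisms of $D^{p+1} \times D^{n-p}$ which is the identity near the boundary; extending by the identity and clutching with the trivial bundle over the other hemisphere gives a smooth $S^n$-bundle $E_\lambda \to S^{4k}$, hence a class $[f_\lambda] \in \pi_{4k}\BDiff_{x_0}(S^n)$. The existence of such non-torsion $\lambda$ in the relevant dimensions follows from classical computations of the stable homotopy groups of spheres together with the surjectivity properties of the $J$-homomorphism on the torsion part.

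Third, and this is the main obstacle, one must detect that $[f_\lambda]$ is non-torsion, i.e.\ that it survives the passage to $\pi_{4k}\bM_{x_0}(S^n) \otimes \Q$. This is the content of Goette's analysis in \cite[Section 5]{GO}: using higher analytic or Franz--Reidemeister torsion invariants of smooth bundles (in the spirit of Igusa, Bismut--Lott, and Dwyer--Weiss--Williams), one evaluates the higher torsion class of $E_\lambda$ and shows that it is a nonzero rational multiple of the generator in the Farrell--Hsiang image. The hard step is this compatibility calculation, which requires showing that Hatcher's topological construction maps to the generator on the algebraic $K$-theory side; once this non-vanishing is in place, the one-dimensionality of $\pi_{4k}\bM_{x_0}(S^n)\otimes\Q$ forces every rational element to be represented by a suitable rational multiple of a Hatcher bundle, completing the proof.
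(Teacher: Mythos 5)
The paper itself offers no proof of this theorem: it is imported verbatim from \cite[Section 5]{GO}, and the surrounding text only reviews the topological construction of the bundles $E_\lambda$. Your outline is a faithful sketch of how the cited proof actually goes --- the Farrell--Hsiang/Waldhausen/Borel identification of $\pi_{4k}\BDiff_{x_0}(S^n)\otimes\Q$, the clutching construction from a non-torsion $\lambda\in\ker J$, and detection of $[E_\lambda]$ by higher Franz--Reidemeister torsion --- with the decisive non-vanishing step deferred to Goette exactly as the paper defers the whole statement, so in substance you take the same route.
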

We recall that the key feature of these bundles is that they are
\emph{exotic smooth $S^n$-bundles}, in the sense that each one is
homeomorphic to, but not diffeomorphic to, the trivial bundle
$S^{i}\times S^{n}\rightarrow S^{i}$. We will develop Goette's
construction so as to provide the appropriate setting for our
geometric arguments in the next section.  An in-depth description of
these bundles and their properties is given in \cite{GO},
and we refer the reader to this paper for further details.
\subsection{Preliminary constructions}
Throughout this section, we will assume that $n$ is odd and is
sufficiently large for all of our purposes. The groups $\pi_i
\bM_{x_0}(S^n)\otimes \Q$ are trivial unless $i=4k$ for appropriate
$k$, and so we will consider only bundles which have base manifold
$S^{4k}$ and fibre $S^{n}$.

Let us begin with the trivial bundle $S^{4k}\times S^{n}\rightarrow
S^{4k}$. By decomposing the fibre sphere $S^{n}$ into a pair of
northern and southern hemispherical discs, $D_{+}^{n}$ and
$D_{-}^{n}$, we can decompose the entire bundle into a pair of disc
bundles, $S^{4k}\times D_{+}^{n}\rightarrow S^{4k}$ and $S^{4k}\times
D_{-}^{n}\rightarrow S^{4k}$, glued together in the obvious way. Thus
the trivial bundle $S^{4k}\times S^{n}\rightarrow S^{4k}$ can be
regarded as the double of the trivial disc bundle $S^{4k}\times
D^{n}\rightarrow S^{4k}$. We will always assume that discs are closed
unless otherwise stated.

To construct a Hatcher bundle, we will make certain adjustments to the
trivial $D^{n}$-bundle over $S^{4k}$ to obtain a smooth bundle which
is homeomorphic to, but not diffeomorphic to the trivial disc
bundle. We will then form the double of this exotic $D^{n}$-bundle to
obtain the desired exotic $S^{n}$-bundle over $S^{4k},$ which will
represent a non-trivial element of $\pi_{4k} \bM_{x_0}(S^n)\otimes
\Q$.

We begin with the trivial disc bundle $S^{4k}\times D^{n}\rightarrow
S^{4k}$. The fibre $D^{n}$ decomposes as
\begin{equation*}
\begin{split}
D^{n}&=D^{p+1}\times D^{q}\\
&=\left(D^{p+1}(\rho)\times D^{q}\right)\cup
\left(S^{p}\times [\rho, 1]) \times D^{q}\right),
\end{split}
\end{equation*}
where $p+q+1=n$, $\rho\in (0,1),$ and $D^{p+1}(\rho)\times D^{q}$ is a
smaller version of the original disc product (with the $D^{p+1}$
factor having radius $\rho$) surrounded by an annular region
$(S^{p}\times [\rho, 1]) \times D^{q}$. The integers $p$ and $q$ may
be assumed to be positive; in fact at various stages in the
construction, it is necessary to allow both $p$ and $q$ to be
large. It will be convenient for later considerations to reorder the
factors and write the annular region as $S^{p} \times D^{q}\times
[\rho, 1]$. Henceforth we will denote this by $A$ and the remaining
piece, $D^{p+1}(\rho)\times D^{q}$, by $P$.\footnote{ \ This smaller
  product of discs resembles an ice-hockey puck, hence the notation.}
Thus, as illustrated in Fig. \ref{Hatcherfibre0} below, we have
\begin{equation*}
D^{n}=A\cup P.
\end{equation*}
\begin{figure}[!htbp]
\vspace{0cm}
\begin{picture}(0,0)%
\includegraphics{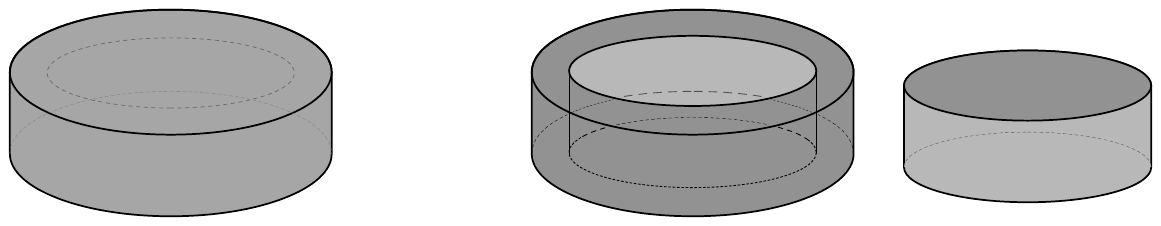}%
\end{picture}%
\setlength{\unitlength}{2500sp}%
\begingroup\makeatletter\ifx\SetFigFont\undefined%
\gdef\SetFigFont#1#2#3#4#5{%
  \reset@font\fontsize{#1}{#2pt}%
  \fontfamily{#3}\fontseries{#4}\fontshape{#5}%
  \selectfont}%
\fi\endgroup%
\begin{picture}(9000,2200)(0,0)
\put(700,1800){\makebox(0,0)[lb]{\smash{{\SetFigFont{10}{8}{\rmdefault}{\mddefault}{\updefault}{\color[rgb]{0,0,0}$D^{n}=D^{p+1}\times D^{q}$}%
}}}}
\put(4300,1800){\makebox(0,0)[lb]{\smash{{\SetFigFont{10}{8}{\rmdefault}{\mddefault}{\updefault}{\color[rgb]{0,0,0}$A=S^{p}\times D^{q}\times [\rho, 1]$}%
}}}}
\put(7000,1500){\makebox(0,0)[lb]{\smash{{\SetFigFont{10}{8}{\rmdefault}{\mddefault}{\updefault}{\color[rgb]{0,0,0}$P=D^{p+1}(\rho)\times D^{q}$}%
}}}}
\end{picture}%
\caption{The decomposition of the fibre disc $D^{n}$ into $A$ and $P$}
\label{Hatcherfibre0}
\end{figure}   
\noindent
The regions, $A$ and $P$, share a common piece of boundary,
$S^{p}\times D^{q}\times\{\rho\},$ and are glued together via the
identity map on $S^{p}\times D^{q}$.  The trivial bundle $S^{4k}\times
D^{n}\rightarrow S^{4k}$, therefore, can be thought of as a union of
sub-bundles $S^{4k}\times A\rightarrow S^{4k}$ and $S^{4k}\times
P\rightarrow S^{4k}$, glued together in the obvious way.

From now on we will write $A_{y}=\{y\}\times A$ and $P_{y}=\{y\}\times
P$, to denote the fibres at $y\in S^{4k}$ of the respective
sub-bundles $S^{4k}\times A\rightarrow S^{4k}$ and $S^{4k}\times
P\rightarrow S^{4k}$. Below we will specify a smooth diffeomorphism
\begin{equation}\label{eq:lambda}
  \Lambda_y: S^{p}\times D^{q}\rightarrow S^{p}\times D^{q}
\end{equation}
  over each $y\in S^{4k},$ where the domain is $(\p D^{p+1}) \times
  D^q \subset \p P_y $ and the target space is the
    product $S^p \times D^q \times \{\rho\} \subset \p A_y $. The idea
    will be to replace the identity map on $S^p\times D^q,$ which
    glues $P_{y}$ to $A_{y}$ to form $D^{p+1} \times D^q,$ with the
    map $\Lambda_{y}$.

Before we can begin the construction we will need a further
decomposition: that of the base manifold $S^{4k}$ into northern and
southern hemispherical discs
\begin{equation*}
  S^{4k}=D_{+}^{4k}\cup D_{-}^{4k}.
\end{equation*}
Over the disc $D_{-}^{4k}$ we take the trivial bundle $D_{-}^{4k}
\times D^n \to D_{-}^{4k},$ that is, we define the map $\Lambda_y$ to
be the identity map on $S^p \times D^q$ for all $y \in D_{-}^{4k}.$ We
therefore need to specify the maps $\Lambda_y$ for $y \in D_{+}^{4k}$
in order to describe the bundle over $D_{+}^{4k},$ and finally we need
to show how to glue the two bundles together over $S^{4k-1}=\partial
D_{-}^{4k}=\partial D_{+}^{4k}.$

Over the disc $D_+^{4k}$ we will actually work with a slightly
different, though topologically equivalent annulus ${\mathcal A}_y,$
which we will define below. We will also work with diffeomorphisms
$\Lambda_y$ as above, however we must adjust the target space to lie
in the boundary of ${\mathcal A}_y.$ Gluing the $P_y$ to ${\mathcal
  A}_y$ creates a fibre bundle over $D_+^{4k}$ with fibres $P_y
\cup_{\Lambda_y} {\mathcal A}_y.$

In order to make these constructions, let us first suppose we have a
collection of embeddings
\begin{equation*}
\bar{\lambda}_y:S^{p}\times D^{q}\longrightarrow S^{p}\times D^{q}
\end{equation*} for $y\in D_{+}^{4k}$ which
vary smoothly with $y$.
\begin{figure}[!htbp]
\vspace{0cm}
\begin{picture}(0,0)%
\includegraphics{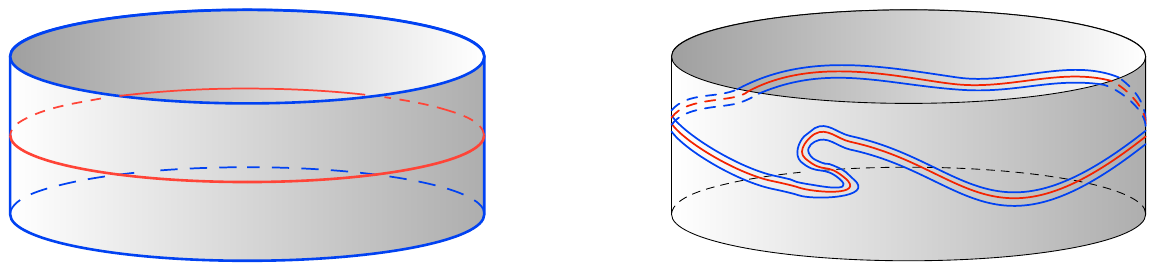}%
\end{picture}%
\setlength{\unitlength}{2500sp}%
\begingroup\makeatletter\ifx\SetFigFont\undefined%
\gdef\SetFigFont#1#2#3#4#5{%
  \reset@font\fontsize{#1}{#2pt}%
  \fontfamily{#3}\fontseries{#4}\fontshape{#5}%
  \selectfont}%
\fi\endgroup%
\begin{picture}(9000,2500)(0,0)
\end{picture}%
\caption{An embedding $\bar{\lambda}_y$ from $S^{p}\times D^{q}$ (left) into $S^{p}\times D^{q}$ (right)}
\label{HatcherEmb}
\end{figure}   
For any given $y\in D_{+}^{4k}$, the image of the embedding
$\bar{\lambda}_{y}$, which we denote $\im \bar{\lambda}_{y}$, is
schematically depicted in Fig. \ref{HatcherEmb}. We will define the
annulus ${\mathcal A}_y$ by
\begin{equation*}
  {\mathcal A}_y = \{y\}\times \im \bar{\lambda}_{y} \times [\rho,1]
  \subset \{y\}\times S^{p}\times D^{q}\times [\rho, 1].
\end{equation*}
We will furthermore define the diffeomorphism $\Lambda_y$ to be simply
the map $\bar{\lambda}_y$ with target space $\im \bar{\lambda}_y.$
Thus we can glue $P_y$ to ${\mathcal A}_y$ using $\Lambda_y$, by
identifying the  points
\begin{equation*}
(y,x)\in \{y\}\times (S^{p}\times D^{q})
\subset \partial P_y \ \ \ \mbox{and} \ \ \ (y, \bar{\lambda}_{y}(x),
\rho)\in\{y\}\times \im \bar{\lambda}_{y}\times \{\rho\}\subset
\partial {\mathcal A}_y,
\end{equation*} 
where $x\in S^{p}\times D^{q}$.  The spaces $P_y$ and ${\mathcal A}_y$
(as a subset of $\{y\}\times S^{p}\times D^{q}\times [\rho, 1]$) are
depicted in Fig. \ref{HatcherEmb2} below. Applying this gluing
fibrewise for all $y\in D_{+}^{4k}$ gives rise to the desired bundle
over $D_{+}^{4k}.$ For convenience we will denote the bundles over
$D^{4k}_{\pm}$ by ${\mathcal E}_{\pm}\rightarrow D_{\pm}^{4k}$.
\begin{figure}[!htbp]
\vspace{-1.0cm}
\begin{picture}(425,150)%
\includegraphics{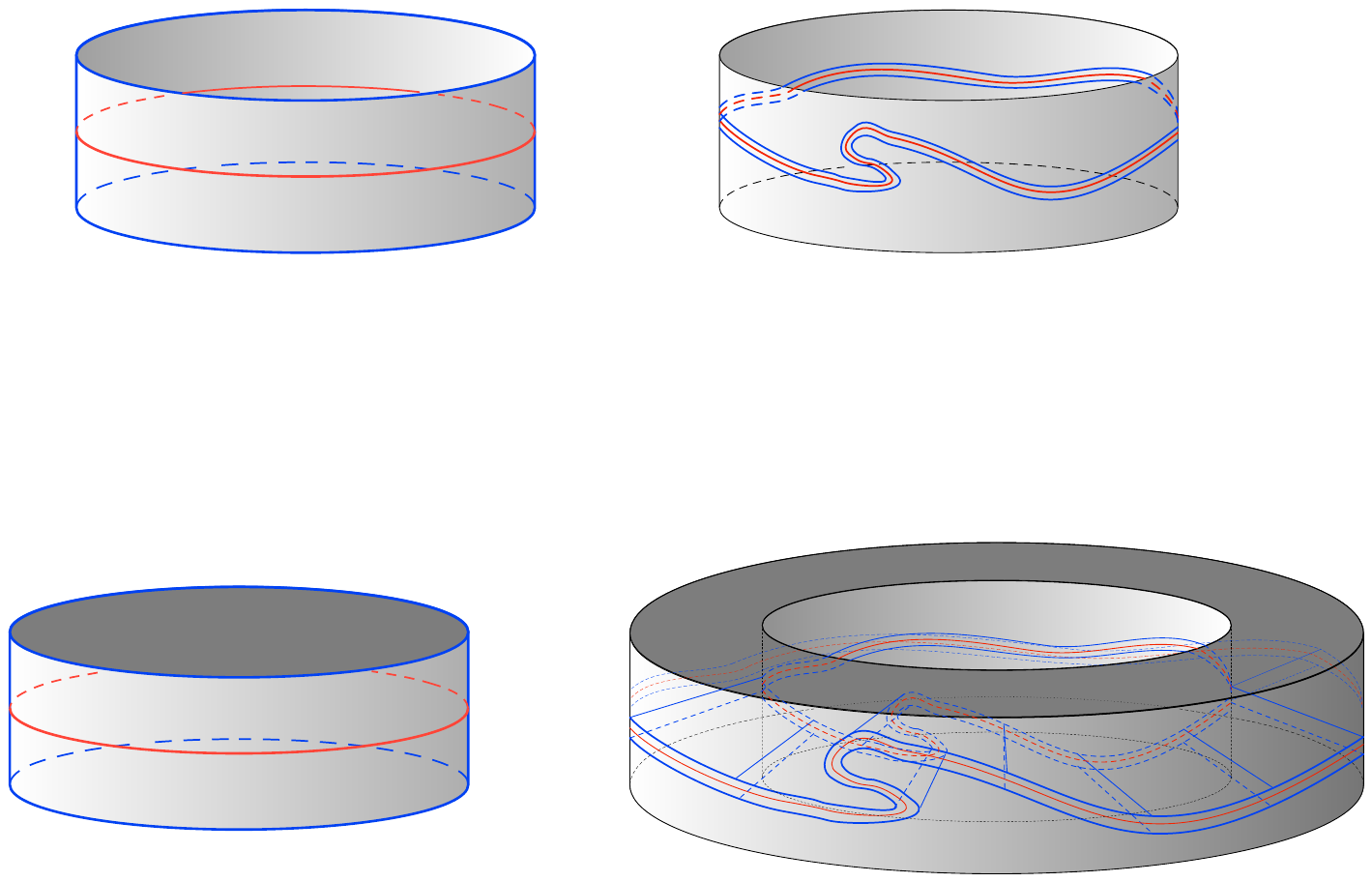}%
\end{picture}%
\setlength{\unitlength}{2500sp}%
\begingroup\makeatletter\ifx\SetFigFont\undefined%
\gdef\SetFigFont#1#2#3#4#5{%
  \reset@font\fontsize{#1}{#2pt}%
  \fontfamily{#3}\fontseries{#4}\fontshape{#5}%
  \selectfont}%
\fi\endgroup%
\caption{The spaces $P_y$ (left) and ${\mathcal A}_y$, as a subset of $S^{p}\times D^{q}\times [\rho, 1]$ (right)}
\label{HatcherEmb2}
\end{figure}   
We claim that for each $y \in D_+^{4k}$ we have a canonical
diffeomorphism
\begin{equation*}
  D^{p+1} \times D^q = P_y \cup_{Id} A_y \cong P_y
  \cup_{\Lambda_y} {\mathcal A}_y.
\end{equation*}
To this end, define a map $\phi_y:A_y \to {\mathcal A}_y$ by setting
$\phi_y=\Lambda_y \times Id_{[\rho,1]},$ where we are viewing
$A_y=(S^p \times D^q) \times [\rho,1]$ and ${\mathcal A}_y=\im
\bar{\lambda}_y \times [\rho,1].$ Using this map we can define a
further map $\Phi_y:P_y \cup_{Id} A_y \to P_y \cup_{\Lambda_y}
{\mathcal A}_y$ by
\begin{equation}
  \Phi_y(z)=\left\{\begin{array}{ll}
  z & z \in P_y \\
  \phi_y(z) & z \in A_y. 
\end{array}
\right.
\end{equation}
It is clear that $\Phi_y$ is a homeomorphism for each $y$. Following
the discussion on adjunction spaces at the start of section
\ref{Perelman} (or see \cite[Chapter 8, section 2]{Hirsch}),
we see that with respect to the canonical differentiable
structure on $P_y \cup_{\Lambda_y} {\mathcal A}_y,$ $\Phi_y$ is
actually a diffeomorphism. Thus the exotic structure which Hatcher
bundles display does not occur at the level of individual fibres: it
is a global bundle phenomenon.

\subsection{Recollection of the $J$-homomorphism}
Here we follow \cite{GO} to construct the family of embeddings
$\bar{\lambda}_{y}:S^{p}\times D^{q+1}\rightarrow S^{p}\times
D^{q+1}$, for $y\in D_{+}^{4k}$, as above.  This requires us to
consider the $J$-homomorphism, which is a map
\begin{equation*}
J : \pi_{4k-1}O(p)\longrightarrow \pi_{4k-1+p}S^{p},
\end{equation*}
where $p$ is sufficiently large.\footnote{\ Here we assume that a base
  point in $S^p$ is the north pole.}  We can think of the
$J$-homomorphism as follows. Consider a map $\lambda:S^{4k-1} \to
O(p)$ determined by a choice of element $[\lambda] \in
\pi_{4k-1}O(p);$ by the Whitney approximation theorem, without loss of
generality we can assume that $\lambda$ is smooth. This then
determines a map $S^{4k-1}\times \mathbb{R}^{p} \to \mathbb{R}^{p},$
by sending a point $(y,z)$ to the orthogonal transformation
$\lambda(y)\in O(p)$ applied to $z\in \mathbb{R}^{p}$.  Since $S^{p}=
\mathbb{R}^{p}\cup \{x_0\}$, it is convenient to identify $\lambda$
with a map $ \lambda : S^{4k-1}\rightarrow \mathcal{C}(S^{p}, S^{p})$,
where $\mathcal{C}(X, Y)$ is a space (with compact-open topology) of
continuous maps $X\to Y $ preserving base-points. It will also be
convenient to denote by $\lambda_y : S^p \to S^p$, the map $\lambda$
evaluated at $y\in S^{4k-1}$.  Passing to homotopy classes gives a map
$\pi_{4k-1}O(p) \to \pi_{4k-1}\mathcal{C}(S^p,S^p),$ and composing
this with an isomorphism $\pi_{4k-1}\mathcal{C}(S^p,S^p) \cong
\pi_{4k-1+p}S^{p}$ then gives the $J$-homomorphism.

Recall that for $p$ sufficiently large compared to $k$, the groups
$\pi_{4k-1}O(p)$ and $\pi_{4k-1+p}S^{p}$ are independent of $p$, and
$\pi_{4k-1+p}S^{p}$ is a finite group, while $\pi_{4k-1}O(p)$ is
infinite cyclic.

Choose a map $\lambda : S^{4k-1} \to O(p)$ such that $[\lambda]\neq 0$
and $[\lambda]\in \ker J$.  This means that $\lambda$ extends to a map
\begin{equation*}
\tilde \lambda : D^{4k}_+ \to \mathcal{C}(S^{p}, S^{p}),
\end{equation*}
where $D^{4k}_+$ is a disc of radius 1.  We can assume that $\tilde
\lambda$ restricted to the collar $S^{4k-1}\times (1/2,1]\subset
  D^{4k}_+$
    coincides with the product-map $\lambda\times \mathrm{Id}$. For
    any $q\geq 4k $, we denote by $\iota : S^{p}\to S^{p}\times D^{q}$
    the inclusion $\iota: x \mapsto (x,0)$.  The map $\lambda$
    and its extension $\tilde \lambda$ give a commutative diagram
\begin{equation}\label{eq:barlambda_1a}
\xymatrix{
  S^{4k-1}\times S^p  \ar[d]^{i\times \mathrm{Id}}  
  \ar[r]^{\ \ \lambda} & S^p  \ar[d]^{\mathrm{Id}} \ar[r]^{\iota\ \ } & S^{p}\times D^{q}
\ar[d]^{\mathrm{Id}}   
\\
 D^{4k}_+\times S^p  
  \ar[r]^{\ \ \tilde\lambda} & S^p  \ar[r]^{\iota\ \ } & S^{p}\times D^{q}
}
\end{equation}
where $i$ is the inclusion of the boundary $S^{4k-1} \to D^{4k}_+$.
\begin{lemma}\label{lem:approx}
For sufficiently large $q$, we can approximate the map $\iota \circ
\tilde\lambda$ by a smoothly varying family of smooth embeddings
\begin{equation}\label{eq:hatlambda}
\hat{\lambda}_{y}: S^{p} \longrightarrow S^{p}\times D^{q}, \quad y\in D^{4k}_+,
\end{equation}
which retain the property that for $y \in S^{4k-1}\times
(1/2,1]\subset D^{4k}_+$, the maps $\hat{\lambda}_y$ agree with $\iota
  \circ \lambda_y$.
\end{lemma}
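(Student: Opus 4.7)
The plan is to exploit the $D^q$ factor as extra room in which to separate points, while keeping the $S^p$ component close to the given map $\tilde\lambda$. Concretely, I will superimpose a fixed small smooth embedding of $S^p$ into $D^q$ onto a smoothing of $\tilde\lambda$, where the size of the embedding is modulated by a bump function on $D^{4k}_+$ that vanishes on the collar.

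First, I would smooth $\tilde\lambda$ relative to the collar. Since $\tilde\lambda$ is continuous but is already smooth, in fact equal to the product map $\lambda\times \mathrm{Id}$, on the neighborhood $S^{4k-1}\times (1/2,1]$ of the boundary, the relative version of Whitney's approximation theorem produces a smooth map $\tilde\mu: D^{4k}_+\times S^p\to S^p$ that is $C^0$-close to $\tilde\lambda$ and agrees with $\tilde\lambda$ on, say, $S^{4k-1}\times [3/4,1]$. Next, provided $q\geq 2p$, Whitney's embedding theorem gives a smooth embedding $e: S^p\hookrightarrow D^q$ whose image lies in a ball of small radius $\delta>0$. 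Finally, pick a smooth cut-off $\sigma: D^{4k}_+\to [0,1]$ that vanishes on $S^{4k-1}\times [3/4,1]$ and is strictly positive on its complement, and define
\begin{equation*}
\hat\lambda_y(x)=\bigl(\tilde\mu_y(x),\ \sigma(y)\cdot e(x)\bigr)\in S^p\times D^q.
\end{equation*}

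The family $\hat\lambda_y$ depends smoothly on $y$ by construction; it coincides with $\iota\circ\lambda_y$ on the collar, where $\sigma(y)=0$ and $\tilde\mu_y=\lambda_y$; and it $C^0$-approximates $\iota\circ\tilde\lambda_y$ once the Whitney approximation is close enough and $\delta$ is small. The key point is that each $\hat\lambda_y$ is an embedding. At points $y$ with $\sigma(y)>0$, the scaled embedding $\sigma(y)\cdot e$ is by itself injective and immersive, so its coupling with $\tilde\mu_y$ in the first component is automatically an injective immersion; since $S^p$ is compact, this gives an embedding. At points $y$ on the collar, where $\sigma(y)=0$, the map reduces to $\iota\circ\lambda_y$, which is an embedding because $\lambda_y\in O(p)$ extends to a diffeomorphism of $S^p$ via the one-point compactification.

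The main (mild) obstacle is coordinating the three ingredients---the Whitney smoothing of $\tilde\lambda$ rel collar, the auxiliary embedding $e$, and the cut-off $\sigma$---so that all required properties hold simultaneously. However, the smoothing can be made $C^0$-arbitrarily close and $\delta$ can be taken arbitrarily small, so the approximation property is immediate. The embedding property, as explained above, holds exactly regardless of how small these quantities are. The dimensional hypothesis "$q$ sufficiently large" comes solely from needing Whitney's embedding theorem for $S^p$ to apply, so any $q\geq 2p$ (which is easily compatible with the earlier standing hypothesis $q\geq 4k$) suffices.
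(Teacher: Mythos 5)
Your proposal is correct and follows essentially the same route as the paper: superimposing a small auxiliary embedding $S^p\hookrightarrow D^q$, scaled by a cut-off on $D^{4k}_+$ vanishing near the boundary, onto a smoothing of $\tilde\lambda$ (the paper writes $\hat{\lambda}(y,x)=(\tilde{\lambda}_y(x),\epsilon(y)\eta(x))$, which is your construction verbatim). The only differences are cosmetic: the paper observes that the standard embedding $S^p\subset\mathbb{R}^{p+1}$ suffices, so $q\geq p+1$ is enough, and your explicit check that the map is still an embedding on the collar (where the cut-off vanishes, via $\lambda_y\in O(p)$) is a detail the paper leaves implicit.
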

\begin{proof}
We begin by recalling that the map $\lambda:S^{4k-1} \to O(p)$ is
assumed to be smooth. By applying the Whitney approximation theorem to
$\tilde{\lambda}$ viewed as a map $\tilde{\lambda}:D^{4k}_+ \times S^p
\to S^p,$ (see for example \cite[Theorem 6.19]{Lee}), we see that by a
$C^0$ arbitrarily small homotopy relative to a boundary neighbourhood
in the domain, we can adjust this map to be smooth. Thus without loss
of generality, we may as well assume in the first place that
$\tilde{\lambda}:D^{4k}_+ \times S^p \to S^p$ is smooth.

To construct the embeddings $\hat{\lambda}_y:S^p \to S^p \times D^q,$
we first let $\eta:S^p \to D^q$ be an arbitrary embedding of the
sphere into a ball $D^q \subset {\mathbb R}^q$ centered at the origin,
and denote by $\epsilon\eta$ the composition consisting of $\eta$
followed by a scaling of $D^q$ onto itself by a factor of $\epsilon\ge
0.$ (The embedding $\eta$ clearly exists provided that $q \ge p+1$.)
Next, we introduce a {\it function} $\epsilon:D^{4k}_+ \to {\mathbb R}$,
which is identically zero in a small neighbourhood of the boundary of
$D^{4k}_+$ within the region in which $\tilde{\lambda}$ is independent
of the radial parameter, is strictly positive otherwise, and is
everywhere smooth.

Finally, set $\hat{\lambda}:D^{4k}_+ \times S^p \to S^p \times D^q$ to
be the map
\begin{equation*}\hat{\lambda}(y,x) \mapsto
  (\tilde{\lambda}_y(x),\epsilon(y)\eta(x)).
\end{equation*}
It is now immediate that this restricts to give a smoothly varying family
of smooth embeddings $\hat{\lambda}_{y}: S^{p} \longrightarrow
S^{p}\times D^{q},$ by virtue of the fact that $\eta$ is an
embedding. Moreover, these embeddings clearly agree with $\iota \circ
\tilde{\lambda}_y$ for $y$ close to the boundary of $D^{4k}_+,$ since
$\epsilon$ vanishes in this region.
\end{proof}
Denote by $N_y \to S^p$ the normal bundle of the embedding
$\hat{\lambda}_{y}$. Considering all $y\in D^{4k}_+$, we obtain a
vector bundle $N\to D^{4k}_+\times S^p$. Since $N_y \to S^p$ is a
trivial bundle, the bundle $N \to D^{4k}_+\times S^p$ is also trivial.
Then by fixing a trivialization of $N$ and using the normal
exponential map, we extend the family of embeddings
(\ref{eq:hatlambda}) to the family of embeddings
\begin{equation}\label{eq:hatlambda-1}
\bar\lambda_{y}: S^{p} \times D^q \longrightarrow S^{p}\times
D^{q}, \quad y\in D^{4k}_+.
\end{equation}
\begin{lemma}\label{lem:goette}{\rm \cite[Proposition 5.4]{GO}}
  The family of embeddings {\rm (\ref{eq:hatlambda-1})} is smoothly isotopic to a family of embeddings $ S^{p}
  \times D^p \longrightarrow S^{p}\times D^{q}$ which restricts over
  $S^{4k-1}=\p D^{4k}_+$ to give linear transformations
\begin{equation}\label{eq:barlambda}
  (x,z) \mapsto
  (\lambda_y(x), (\lambda_y^{-1}\oplus \mathrm{Id}_{\mathbb{R}^{p-q}})(z))
    \in S^{p}\times D^{q}, \ \ \ y\in S^{4k-1}.
\end{equation} 
\end{lemma}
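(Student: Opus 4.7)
The plan is to exploit the residual freedom in the construction of $\bar\lambda$, specifically the choice of trivialization of the normal bundle $N \to D^{4k}_+ \times S^p$ used when extending $\hat\lambda$ to $\bar\lambda$ via the normal exponential map, and to modify this choice by a smooth isotopy so that the boundary behaviour matches (\ref{eq:barlambda}). First I would unpack what happens on $S^{4k-1}$ under the obvious product trivialization: since the cutoff function $\epsilon$ from the proof of Lemma \ref{lem:approx} vanishes near $\partial D^{4k}_+$, we have $\hat\lambda_y = \iota \circ \lambda_y$ with image the equator $S^p \times \{0\} \subset S^p \times D^q$; the normal bundle there is canonically framed by the $D^q$-directions, and the resulting normal-exponential embedding is simply $(x,z) \mapsto (\lambda_y(x), z)$.

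Comparing with (\ref{eq:barlambda}), the discrepancy is exactly a fibrewise orthogonal twist of the $D^q$ factor by $\lambda_y^{-1} \oplus \mathrm{Id}_{\mathbb{R}^{q-p}}$, using the standard stabilization $O(p) \hookrightarrow O(q)$. Since any two trivializations of the trivial bundle $N$ differ by a smooth gauge transformation $G: D^{4k}_+ \times S^p \to O(q)$, and since different trivializations produce smoothly isotopic families of normal-exponential embeddings, the task reduces to constructing such a $G$ whose restriction to $S^{4k-1} \times S^p$ is $(y,x) \mapsto \lambda_y^{-1} \oplus \mathrm{Id}_{\mathbb{R}^{q-p}}$ (which is constant in $x$). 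Postcomposition of the normal exponential with $G$ will then produce the isotopic family with the desired linear form on the boundary.

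The central obstacle I foresee is realizing this extension. Because the boundary data is constant in $x$, the naive extension problem reduces to the class of $y \mapsto \lambda_y^{-1}$ in $\pi_{4k-1}O(q)$, which is the stabilized image of $-[\lambda]$ and is not null in general. The resolution is that we do not need a gauge transformation landing in $O(q)$ throughout the interior: only an isotopy of embeddings of $S^p \times D^q$ is required. Using the smooth extension $\tilde\lambda: D^{4k}_+ \to \mathcal{C}(S^p, S^p)$, which exists precisely because $[\lambda] \in \ker J$, one can build a smoothly varying family of diffeomorphisms of $S^p \times D^q$ parametrized by $D^{4k}_+$, non-linear in the interior but restricting on $\partial D^{4k}_+$ to the rigid linear twist. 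The delicate point, and what I expect to be the main technical obstacle, is verifying that this non-linear filling glues smoothly to the boundary linear twist through a collar neighbourhood of $S^{4k-1}$, and that throughout the interpolation $\bar\lambda_y$ remains an embedding rather than merely an immersion. Once these smoothness and embedding properties are checked, the associated family of normal exponentials furnishes the required smooth isotopy.
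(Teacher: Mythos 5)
The first thing to say is that the paper offers no proof of this lemma at all: it is imported wholesale from \cite[Proposition 5.4]{GO}, so there is no in-paper argument to measure yours against, and your proposal has to stand on its own. Your set-up is the right one as far as it goes: with the product framing of the normal bundle along $\partial D^{4k}_+$ the thickened embedding is $(x,z)\mapsto(\lambda_y(x),z)$, the required correction is the fibrewise twist by $\lambda_y^{-1}\oplus\mathrm{Id}$, and extending that twist as a gauge transformation of the trivial bundle $N\to D^{4k}_+\times S^p$ is obstructed by $-[\lambda]\neq 0$ in $\pi_{4k-1}O(q)$. The problem is that your escape from this obstruction is not a proof. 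You assert that $\tilde\lambda$ can be used to build a family of diffeomorphisms of $S^p\times D^q$ over $D^{4k}_+$ restricting to the rigid linear twist on the boundary, but $\tilde\lambda_y$ is only a pointed map $S^p\to S^p$ --- for $y$ near the centre of $D^{4k}_+$ it may be taken to be constant --- so it cannot directly furnish diffeomorphisms, and no construction is given. Worse, the existence of such a family is itself obstructed by the image of $-[\lambda]$ in $\pi_{4k-1}$ of the relevant space of diffeomorphisms (or embeddings), so postulating it simply restates the problem you were trying to solve. Taken at face value, your obstruction analysis argues that the lemma is \emph{false}, and the ``delicate point'' you defer is the entire content.

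The missing idea is that the twist $\lambda_y^{-1}\oplus\mathrm{Id}$ is not something to be engineered after the fact; it is \emph{forced} by the normal-bundle geometry of the interior embeddings, and this is precisely what Goette's Proposition 5.4 computes. For interior $y$ with $\epsilon(y)>0$, the embedding $\hat\lambda_y(x)=(\tilde\lambda_y(x),\epsilon(y)\eta(x))$ has its $D^q$-component already an embedding, and where $\tilde\lambda_y$ degenerates the natural complement to the image tangent space is no longer $\{0\}\times\mathbb{R}^q$ but is built from $TS^p$ together with the normal bundle of $\eta$ in $D^q$. Transporting the resulting interior trivialization of $N$ out to $\partial D^{4k}_+$, one finds that it differs from the boundary product framing by exactly the class $[\lambda]$ (via the derivative of $\lambda_y$), i.e.\ the obstruction you computed relative to the product framing is $[\lambda]$ rather than $0$, so the \emph{twisted} boundary framing is the one that extends over the interior. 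Without this derivative computation --- which is absent from your argument --- the apparent obstruction $-[\lambda]\neq 0$ is never cancelled, and the proposal does not close.
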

\begin{rem}
According to Lemma \ref{lem:goette}, we can assume that the family of
embeddings {\rm (\ref{eq:hatlambda-1})} satisfies the condition {\rm
  (\ref{eq:barlambda})}. We notice also that
(\ref{eq:barlambda}) implies that $\im \bar\lambda_y$
coincides with $\{y\}\times S^p\times D^q \subset \p A_y$. In
particular, we can assume that
\begin{equation}\label{eq:A=A}
  {\mathcal A}_y=A_y \ \ \ \mbox{if $y\in
D_+^{4k}$ is near the boundary $\p D_+^{4k}$.}
\end{equation} 
\end{rem}
Now recall that the embeddings $\bar{\lambda}_y$ give rise to the
desired family of diffeomorphisms $\Lambda_y$, which by definition
coincide with the maps $\bar{\lambda}_y$ when the target space is
restricted to $\im \bar{\lambda}_y.$ This completes the construction
of the bundle $\mathcal{E}_{+}\to D^{4k}_+.$

We conclude this section with the result below, which follows from the
proof of Lemma \ref{lem:approx}:
\begin{cor}\label{cor:obs}
The diffeomorphisms $\Lambda_y:S^p \times D^q \to S^p \times D^q$ are
determined by their restriction to an arbitrarily small neighbourhood
of the sphere $S^p \times \{0\} \subset S^p \times D^q \subset \p P_y$
and its image in $\p {\mathcal A}_y,$ for each $y \in D^{4k}_+.$
\end{cor}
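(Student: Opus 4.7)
The plan is to trace through the explicit construction of $\Lambda_y$ given in the proofs of Lemma \ref{lem:approx} and Lemma \ref{lem:goette}, and observe that every piece of $y$-dependent data in that construction is already encoded in the behaviour of $\Lambda_y$ on an arbitrarily small neighbourhood of $S^p \times \{0\}$.

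First I would unpack the construction. From the proof of Lemma \ref{lem:approx}, the embedding $\hat{\lambda}_y : S^p \to S^p \times D^q$ is given by $x \mapsto (\tilde{\lambda}_y(x), \epsilon(y)\eta(x))$, where $\eta : S^p \hookrightarrow D^q$ and the smooth function $\epsilon: D^{4k}_+ \to \mathbb{R}$ are chosen once and for all, independently of $y$. The tubular extension $\bar{\lambda}_y : S^p \times D^q \to S^p \times D^q$ of $\hat{\lambda}_y$ is then obtained via the normal exponential map applied through a fixed trivialisation of the (trivial) normal bundle $N \to D^{4k}_+ \times S^p$; this trivialisation, together with the ambient product metric on $S^p \times D^q$, is also fixed a priori. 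The diffeomorphism $\Lambda_y$ is simply $\bar{\lambda}_y$ with target restricted to $\im \bar{\lambda}_y$.

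Next, I would argue the corollary in two short steps. Step one: the restriction of $\Lambda_y$ to the central slice $S^p \times \{0\} \subset S^p \times D^q$ is exactly $\hat{\lambda}_y$. Hence if we know $\Lambda_y$ on any open neighbourhood $U$ of $S^p \times \{0\}$, then by restriction we recover $\hat{\lambda}_y$, and thereby the map $\tilde{\lambda}_y$ (which is just the first component of $\hat{\lambda}_y$). Step two: given $\hat{\lambda}_y$ and the fixed auxiliary data (trivialisation of $N$ and ambient metric), the extension $\bar{\lambda}_y$ is reconstructed by a canonical procedure, namely the normal-exponential construction of Lemma \ref{lem:approx}. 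Therefore $\Lambda_y$ is uniquely determined on all of $S^p \times D^q$.

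There is no genuine obstacle here; the only subtlety to flag is that the normal exponential extension depends on the trivialisation of $N$ and on an ambient metric, but both are part of the fixed setup and carry no additional $y$-dependent information. With that observation in place, the corollary follows immediately from the construction of $\bar{\lambda}_y$ in Lemma \ref{lem:approx}.
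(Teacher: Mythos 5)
Your reconstruction argument is sound as far as it goes: the restriction of $\Lambda_y$ to any neighbourhood of $S^p\times\{0\}$ recovers $\hat\lambda_y$ (and, via its derivative in the $D^q$-directions along $S^p \times \{0\}$, even the chosen trivialisation of the normal bundle, so the trivialisation need not be treated as external data at all), and the normal-exponential extension $\bar\lambda_y$ is then determined. But this proves only the tautological half of the statement, and it is not the half the corollary is there to supply. The phrase ``and its image in $\p \mathcal{A}_y$'' is doing real work: the content of the corollary, and the entirety of the paper's one-line proof, is that the construction of Lemma \ref{lem:approx} can be arranged so that the image of the small neighbourhood is itself confined to an arbitrarily small neighbourhood of $S^p\times\{0\}$ on the target side. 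Concretely, one chooses the auxiliary embedding $\eta:S^p\to D^q$ to have image inside an arbitrarily small ball about the origin of $D^q$, and the function $\epsilon$ to have an arbitrarily small upper bound, so that $\hat\lambda_y(x)=(\tilde\lambda_y(x),\epsilon(y)\eta(x))$ never strays far from $S^p\times\{0\}$. Your proof gives no control whatsoever on where the image lies.

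This control is exactly what is needed when the corollary is invoked in Section \ref{Construct}. There the fibres are cut down from $D^{p+1}\times D^q$ to the ellipsoid $\mathbf{E}$, and the gluing locus between the puck and annulus parts of an ellipsoid fibre is no longer all of $S^p\times D^q\times\{\rho\}$ but only $S^p\times D^q(t')\times\{\rho\}$ for some $t'<t_1$, i.e.\ a proper neighbourhood of $S^p\times\{0\}$. For the gluing maps $\Lambda_y$ to restrict to gluing maps of the ellipsoid sub-bundles, this neighbourhood must be carried into the corresponding portion of $\p(\mathbf{E}_y\cap\mathcal{A}_y)$; if $\epsilon(y)\eta$ were large, the image of $S^p\times\{0\}$ could leave the ellipsoid entirely and the restricted gluing would not even be defined. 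So the gap is not in the logic of your reconstruction but in its target: you have proved a determination statement that is essentially automatic, while omitting the geometric localisation of the image that the corollary actually asserts and that the Main Theorem's proof depends on.
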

\begin{proof}
  We merely have to observe that in the proof of Lemma
  \ref{lem:approx} we can choose the embedding $\eta$ so that its
  image is contained in an arbitrarily small ball about the origin in
  $D^q,$ and we can choose the function $\epsilon$ to have an
  arbitrarily small upper bound.
\end{proof} 
\subsection{The Hatcher bundle $E_{\lambda}$}
It remains to describe how the bundle ${\mathcal E}_{+}\to D^{4k}_+$
is to be glued to the trivial disc bundle ${\mathcal E}_{-}\to
D^{4k}_-$ along the boundary of the base discs $\partial D^{4k}_{+}
=\partial D^{4k}_{-}=S^{4k-1}$. Recall that each disc fibre is the
union of an annulus and a `puck', and, according to
  (\ref{eq:A=A}), we can assume the annulus parts ${\mathcal A}_y$ of
the fibres are equal to $A_y$ near the boundary of $D^{4k}_+$.

We can therefore begin by gluing the annulus parts, $A_y$ and
${\mathcal A}_y$, of the fibres at the boundary of ${\mathcal E}_{+}$
and ${\mathcal E}_{-}$ via the identity map.  To glue the `puck' part of
the fibres we observe that, according to Lemma \ref{lem:goette}, the
maps $\Lambda_y: S^p\times D^q\to S^p\times D^q$ defined above are
products of rotations for each $y$ near to $\partial D^{4k}_+.$ Thus
the $\Lambda_y$ extend to diffeomorphisms $\tilde{\Lambda}_y:P_y \to
P_y$ for such $y,$ using the rotations (\ref{eq:barlambda}).  We use
the $\tilde{\Lambda}_y$ to glue the puck sub-bundles of ${\mathcal
  E}_{+}$ to ${\mathcal E}_{-}$ for $y \in S^{4k-1},$ noting that this
is consistent with the gluing of the annuli.
The disc bundle over $S^{4k}$ which results from this gluing we will
denote by ${\mathcal E}_\lambda$, since it ultimately depends on our
choice of $[\lambda] \in \ker J.$ As noted at the start of this
section, we can then double this disc bundle to produce a
desired Hatcher bundle $E_\lambda:={\mathcal
  E}_\lambda \cup {\mathcal E}_{\lambda}$ over $S^{4k}$.

It can be shown that $E_\lambda$ is bundle homeomorphic but not bundle
diffeomorphic to the corresponding trivial bundle, see
\cite[Proposition 5.8, Theorem 5.13]{GO} and \cite[Section 1]{GI-2}.

\section{The fibrewise Ricci positive metric construction}\label{Construct}
\subsection{Foreword}
In this section we will ultimately prove Main Theorem. As discussed in
section \ref{sec:Intro}, this reduces to showing that a Hatcher bundle
admits a fiberwise Ricci positive metric. Our general strategy is to
show the existence of fiberwise Ricci positive metrics on the Hatcher
disc bundles constructed in the last section, and then use the family
version of the Perelman gluing result, Theorem \ref{family}, to glue
two copies of such a disc bundle together within Ricci positivity to
create the desired object. In order to perform this gluing, we need to
consider the normal curvatures at the boundary of the disc fibres.

There is an immediate problem, however, with the boundary of the
discs: these were constructed as products $D^{p+1} \times D^q.$ Thus,
as written, each of these is a manifold with corners. Moreover, in order for these
discs to be equipped with Ricci positive metrics, it is natural to
consider product metrics which respect the topological product
structure. The resulting boundary is not smooth, however, and we need
a smooth boundary in order to apply the Perelman gluing technique.

In order to deal with this issue, our approach is to cut out a solid
`ellipsoid' from within the product of discs; see
Fig. \ref{HatcherDisk2}. This will be constructed to have a smooth boundary and normal
curvatures at the boundary (with respect to the outward normal) which are
all positive. Thus, provided the ambient metric on the product of
discs has positive Ricci curvature, we can glue two such ellipsoids
together using the Perelman gluing technique. Our main task in this
section is therefore to show how to construct such an ellipsoid, and
prove that it has the desired properties.

To avoid any confusion with indices, it is convenient
to work with the product $D^{m} \times D^n$, where the role of $m$ and
$n$ is symmetric. We specify the formulas for $m=p+1$, $n=q$ at the
end of the section when we will prove Main Theorem.

\begin{figure}[!htbp]
\begin{picture}(0,0)%
\hspace{4cm}
\includegraphics{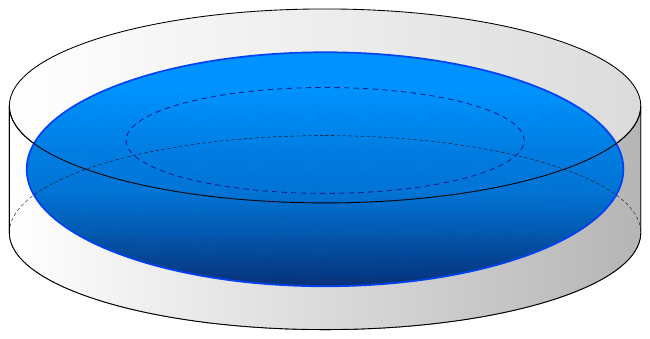}%
\end{picture}%
\setlength{\unitlength}{2500sp}%
\begingroup\makeatletter\ifx\SetFigFont\undefined%
\gdef\SetFigFont#1#2#3#4#5{%
  \reset@font\fontsize{#1}{#2pt}%
  \fontfamily{#3}\fontseries{#4}\fontshape{#5}%
  \selectfont}%
\fi\endgroup%
\begin{picture}(10500,3000)(0,0)
\end{picture}%
\caption{The solid ellipsoid contained in $D^{m}\times D^{n}$}
\label{HatcherDisk2}
\end{figure}  
\subsection{The metric on $D^{m} \times D^n$}
First, we will consider the following metric on $D^{m} \times D^n:$
\begin{equation*}
h:=ds^2+\alpha^2(s)ds^2_{m-1}+dt^2+\beta^2(t)ds^2_{n-1},
\end{equation*}
where $s$ and $t$ are the radial parameters in the discs $D^{m}$ and
$D^n$ respectively. (In our later metric constructions we will use a
slight variant of this metric.)  Let us assume that the radii of the
two discs are $s_1$ and $t_1$ respectively. We will impose the following conditions on the smooth warping
functions $\alpha$ and $\beta$:
\vspace*{-5mm}

\begin{figure}[!htbp]
\vspace{-0.5cm}
\begin{picture}(0,0)%
\includegraphics{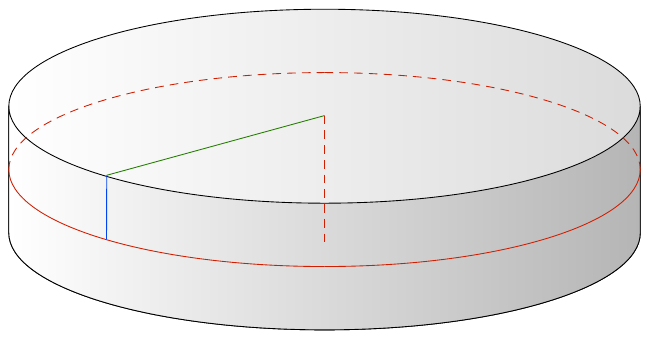}%
\end{picture}%
\setlength{\unitlength}{2500sp}%
\begingroup\makeatletter\ifx\SetFigFont\undefined%
\gdef\SetFigFont#1#2#3#4#5{%
  \reset@font\fontsize{#1}{#2pt}%
  \fontfamily{#3}\fontseries{#4}\fontshape{#5}%
  \selectfont}%
\fi\endgroup%
\begin{picture}(5000,4000)(0,0)
\put(850,1200){\makebox(0,0)[lb]{\smash{{\SetFigFont{10}{8}{\rmdefault}{\mddefault}{\updefault}{\color[rgb]{0,0,0.8}$t$}%
}}}}
\put(1700,1800){\makebox(0,0)[lb]{\smash{{\SetFigFont{10}{8}{\rmdefault}{\mddefault}{\updefault}{\color[rgb]{0,0.5,0}$s$}%
}}}}
\end{picture}%
\caption{Coordinates on the space $X=D^{m}\times D^{n}$}
\label{Hatcherdisk1}
\end{figure}  

\medskip
\begin{itemize}
\item $\alpha,$ $\beta$ are odd in a small neighbourhood of $s=0$
respectively $t=0$ (or rather, one can extend $\alpha$ and $\beta$ to
negative values of $s$ and $t$ such that this extended function is
smooth and odd), and in particular $\alpha(0)=\beta(0)=0$;
\item $\alpha'(0)=\beta'(0)=1;$
\item $\alpha'>0$ and $\beta'>0$ whenever $s$ respectively $t$ is positive;
\item $\alpha''(s)<0$ for all $s \in [0,s_1]$ and
$\beta''(t)<0$ for all $t \in [0,t_1].$
\end{itemize}
It follows easily from the warped product formulas for Ricci curvature
that these conditions ensure that the metric $h$ has strictly positive
Ricci curvature, see, for example, \cite[section 9J]{B}.
\subsection{Specifying the ellipsoid}
In order to construct the ellipsoid, we introduce a unit speed curve
$\mu=\mu(r)$ into the $(s-t)$-plane. This curve will have the
profile given in Fig. \ref{funcf}.

Notice that the illustrated curve separates the rectangle
$[0,s_1] \times [0,t_1]$ into two regions, and suppose that the
parameter $r$ is such that $\mu(0)=(0,t_0)$ and $\mu(r_0)=(s_0,0)$ for
some $s_0 \in (0,s_1)$ and $t_0 \in (0,t_1).$ We will define the
ellipsoid $\mathbf{E}$ to be the subset of $D^{p+1} \times D^q$
consisting of all elements whose $s$ and $t$ coordinates lie in the
region on or below this curve.
 \begin{figure}[!htbp]
\begin{picture}(0,0)%
\includegraphics{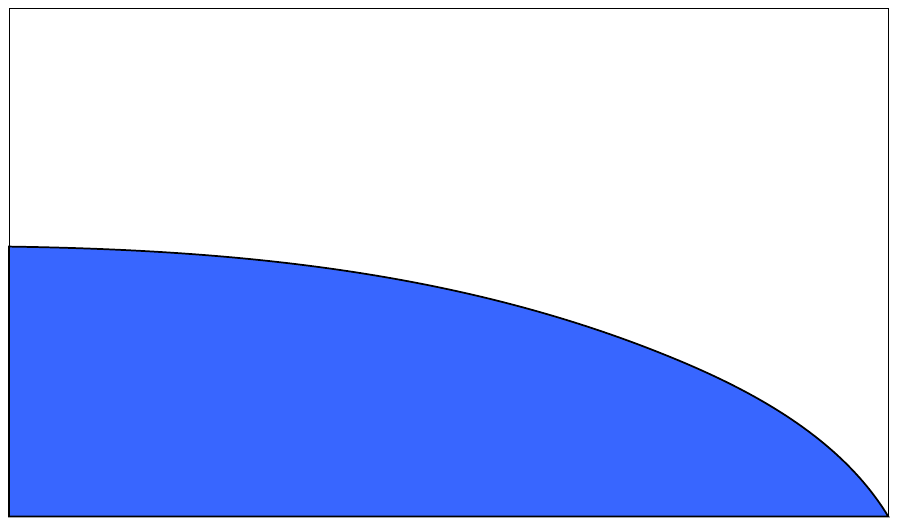}%
\end{picture}%
\setlength{\unitlength}{2500sp}%
\begingroup\makeatletter\ifx\SetFigFont\undefined%
\gdef\SetFigFont#1#2#3#4#5{%
  \reset@font\fontsize{#1}{#2pt}%
  \fontfamily{#3}\fontseries{#4}\fontshape{#5}%
  \selectfont}%
\fi\endgroup%
\begin{picture}(7000,4000)(0,0)
\put(100,-200){\makebox(0,0)[lb]{\smash{{\SetFigFont{10}{8}{\rmdefault}{\mddefault}{\updefault}{\color[rgb]{0,0,0}$0$}%
}}}}
\put(6700,-200){\makebox(0,0)[lb]{\smash{{\SetFigFont{10}{8}{\rmdefault}{\mddefault}{\updefault}{\color[rgb]{0,0,0}$s_0$}%
}}}}

\put(3500,-200){\makebox(0,0)[lb]{\smash{{\SetFigFont{10}{8}{\rmdefault}{\mddefault}{\updefault}{\color[rgb]{0,0,0}$s$}%
}}}}
\put(-600,2050){\makebox(0,0)[lb]{\smash{{\SetFigFont{10}{8}{\rmdefault}{\mddefault}{\updefault}{\color[rgb]{0,0,0}$t=t_0$}%
}}}}
\put(-200,1000){\makebox(0,0)[lb]{\smash{{\SetFigFont{10}{8}{\rmdefault}{\mddefault}{\updefault}{\color[rgb]{0,0,0}$t$}%
}}}}
\end{picture}%
\caption{The curve $\mu$ which gives rise to the ellipsoid $\mathbf{E}$}
\label{funcf}
\end{figure}   

We need to specify $\mu$ in more detail, and to this end we will write
$\mu(r)=(\mu_s(r),\mu_t(r)).$ Let us impose the following conditions
on $\mu_s,\mu_t:$
\begin{enumerate}
\item $\mu_s(0)=0,$ $\mu_s(r_0)=s_0,$ $\mu_s'(0)=1,$ $\mu_s'(r_0)=0,
$ $\mu_s''(r)<0$ for all $r \in [0,r_0],$ $\mu_s$ is odd in a
neighbourhood of $r=0$, and even locally about the point $r=r_0$ (in
the sense that there is a smooth extension such that
$\mu_s(r_0-\epsilon)=\mu_s(r_0+\epsilon)$ for all sufficiently small
$\epsilon>0$);
\item $\mu_t(0)=t_0,$ $\mu_t(r_0)=0,$ $\mu_t'(0)=0,$ $\mu_t'(r_0)=-1,$
$\mu_t''(r)<0$ for all $r \in [0,r_0],$ $\mu_t$ is even about the
point $r=0,$ and odd in a neighbourhood of $r=r_0.$
\end{enumerate}
Given a unit speed curve $\mu$ satisfying (1) and (2) above, we need
to check that the resulting ellipsoid has a smooth boundary. Firstly,
it is clear from the smoothness of all the functions involved that
this boundary will indeed be smooth everywhere except possibly
when $r=0$ or $r=r_0.$ We must therefore check the corresponding
`ends' of the ellipsoid for smoothness. With this in mind, we begin by
observing that the metric induced by $h$ on the ellipsoid is
\begin{equation}
h_{\mathbf{E}}:=dr^2+\alpha^2(\mu_s(r))ds^2_{m-1}+\beta^2(\mu_t(r))ds^2_{n-1}.
\end{equation}
Given that $\mu_s(0)=0, \mu_t(r_0)=0,$ and that $\mu_s,\mu_t>0$
otherwise, we see immediately from the form of the metric,
$h_{\mathbf{E}}$, that this is a (not necessarily smooth) metric on a
sphere of dimension $m+n-1.$ The boundary conditions which such a
metric must satisfy in order to give a smooth sphere metric are
well-known (see for example \cite[Section 1.4]{PP}): the scaling
funtions $\alpha(\mu_s(r))$ and $\beta(\mu_t(r))$ must obey the
following rules
\begin{enumerate}
\item[(i)] be everywhere non-negative, with $\alpha(\mu_s(r))=0$ if
  and only if $r=0$, and $\beta(\mu_t(r))=0$ if and only if $r=r_0$;
\item[(ii)] $\alpha(\mu_s(r))$ must be odd at $r=0$ and even at $r=r_0$;
\item[(iii)] $\beta(\mu_t(r))$ must be even at $r=0$ and odd at $r=r_0$;
\item[(iv)] the derivative of $\alpha(\mu_s(r))$ must take the value 1
  at $r=0$, and that of $\beta(\mu_t(r))$ must take the value $-1$ at
  $r=r_0.$
\end{enumerate}
Property (i) follows immediately from the conditions imposed on
$\alpha,\beta, \mu_s, \mu_t.$ For property (iii) we note that by
definition $\mu_t(r)$ is odd at $r=r_0$ and $\beta(t)$ is odd at
$t=0$, and it follows trivially from this that the composition
$\beta(\mu_t(r))$ is odd at $r=r_0.$ For the evenness requirement it
suffices to note that the composition of an even function followed by
an arbitrary function is trivially even. Property (ii) follows by
similar arguments. Finally, property (iv) follows by the chain rule
since $\alpha'(0)\mu_s'(0)=1,$ and $\beta'(0)\mu_t'(r_0)=-1.$

In summary then, we have demonstrated how to choose a unit speed curve
$\mu$ such that the resulting ellipsoid is smooth, and we will work
with the same subset of $D^{p+1} \times D^q$ for each fibre of the
Hatcher disc bundle when we construct the fiberwise metric later in
this section.
\subsection{Normal curvatures of the ellipsoid}
The other issue we need to address in relation to the Perelman gluing
of discs (or rather ellipsoids) is that of the normal curvatures at
the boundary. As observed previously (Corollary \ref{pcor}), it will
suffice if these normal curvatures are all positive with respect to
the outward pointing normal. It turns out, however, that the normal
curvatures of the ellipsoid we have constructed are only non-negative
with respect to the ambient metric $h$. In particular, the normal
curvatures vanish at the points of the ellipsoid corresponding to
$r=0$ and $r=r_0.$ To rectify this situation we work with the same
ellipsoid, but a slightly modified metric on $D^{m} \times D^n$.

Let us define a metric $g$ on $D^{m} \times D^n$ as follows:
\begin{equation}
g:=\delta^2(t)ds^2+\delta^2(t)\alpha^2(s)ds^2_{m-1}+
\gamma^2(s)dt^2+\gamma^2(s)\beta^2(t)ds^2_{n-1}.
\end{equation}
The new functions introduced here, $\delta(t)$ and $\gamma(s)$, are
chosen so as to satisfy the following properties:
\begin{enumerate}
\item[(a)] $\delta'(t) \ge 0$ for all $t \in [0,t_1],$
  $\delta'(t_0)>0,$ $\delta(t)\equiv 1$ in a neighbourhood of $t=0;$
\item[(b)] $\gamma'(s) \ge 0$ for all $s \in [0,s_1],$
  $\gamma'(s_0)>0,$ $\gamma(s)\equiv 1$ in a neighbourhood of $s=0.$
\end{enumerate}
We will see that the positivity of the derivatives of $\delta$ and
$\gamma$ at $t_0$, respectively $s_0$, is enough to give us strictly
positive normal curvatures globally. Of course we must not forget that
the metric $g$ must have positive Ricci curvature. By the openness of
the positivity condition we can choose $\delta$ and $\gamma$
satisfying (a) and (b) above sufficiently close in a $C^2$-sense to
the constant function with value 1 so that $\Ric(g)>0.$ We therefore
add a third condition:
\begin{enumerate}
\item[(c)] $\delta$ and $\gamma$ are such that $\Ric(g)>0,$ at least
  in some neighbourhood of $\mathbf{E}$.
\end{enumerate}
\begin{lemma}\label{lem:normal-curv}
The normal curvatures at the boundary of the ellipsoid $\mathbf{E}$
are all stricly positive with respect to the ambient metric $g$.
\end{lemma}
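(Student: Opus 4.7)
The plan is to exploit the $O(m-1)\times O(n-1)$ symmetry of $\partial\mathbf{E}$ inside $(D^m\times D^n, g)$: reflection in each sphere factor is an isometry of $g$ preserving $\partial\mathbf{E}$, so the shape operator at any boundary point is block-diagonal with respect to the $g$-orthogonal decomposition
\[
T_p(\partial\mathbf{E})=T_p S^{m-1}\oplus T_p S^{n-1}\oplus\mathrm{span}(T),\qquad T:=\dot\mu_s\partial_s+\dot\mu_t\partial_t,
\]
and the first two blocks act as scalars. Thus it suffices to verify strict positivity of three principal curvatures $k_{m-1}$, $k_{n-1}$, $k_r$. First I would determine the outward unit normal: orthogonality to $T$ in $g$ together with unit length forces
\[
\nu=A\,\partial_s+B\,\partial_t,\qquad A=-\frac{\gamma\,\dot\mu_t}{\delta\,\Sigma},\quad B=\frac{\delta\,\dot\mu_s}{\gamma\,\Sigma},\quad \Sigma:=\sqrt{\delta^2\dot\mu_s^{2}+\gamma^2\dot\mu_t^{2}},
\]
with $A,B\ge 0$ throughout $\partial\mathbf{E}$ and strictly positive for interior $r\in(0,r_0)$.

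For the sphere directions, using $|u|_g^2=\delta^2(\mu_t)\alpha^2(\mu_s)|u|_{S^{m-1}}^2$ for $u\in TS^{m-1}$ and the analogous expression for $TS^{n-1}$, the Koszul formula with $u$ extended $S^{m-1}$-invariantly so that $[u,\partial_s]=[u,\partial_t]=0$ yields
\[
k_{m-1}=A\,\frac{\alpha'(\mu_s)}{\alpha(\mu_s)}+B\,\frac{\delta'(\mu_t)}{\delta(\mu_t)},\qquad k_{n-1}=A\,\frac{\gamma'(\mu_s)}{\gamma(\mu_s)}+B\,\frac{\beta'(\mu_t)}{\beta(\mu_t)}.
\]
Every summand is nonnegative by the hypotheses, and interior positivity is immediate from $\alpha',\beta'>0$ together with $A,B>0$. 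At $r=0$ the first summand of $k_{m-1}$ appears as $0/0$ because $TS^{m-1}$ collapses; I would handle this by working in a Cartesian chart near the pole $\{s=0\}\times S^{n-1}_{t_0}$ and computing the normal curvature in any $x$-direction as the Hessian of a defining function, obtaining $-\mu_t''(0)/\delta^2(t_0)+\delta'(t_0)/\delta(t_0)>0$ by $\mu_t''(0)<0$ and $\delta'(t_0)>0$. A mirror calculation at $r=r_0$ uses $\mu_s''(r_0)<0$ and $\gamma'(s_0)>0$, while $k_{n-1}|_{r=0}=B\,\beta'(t_0)/\beta(t_0)>0$ and $k_{m-1}|_{r=r_0}=A\,\alpha'(s_0)/\alpha(s_0)>0$ follow directly from the formulas.

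For the curve direction, the 2D slice obtained by fixing a point of $S^{m-1}$ and of $S^{n-1}$ is the fixed set of an isometric involution of $g$, hence totally geodesic, which identifies $k_r$ with $-\langle\nabla^{g_2}_T T,\nu\rangle_{g_2}/|T|_{g_2}^2$, where $g_2:=\delta^2(t)\,ds^2+\gamma^2(s)\,dt^2$ is the induced slice metric. Computing with the Christoffels of $g_2$, I would obtain
\[
\langle\nabla^{g_2}_T T,\nu\rangle_{g_2}=\frac{\delta\gamma}{\Sigma}\Bigl[(\dot\mu_s\ddot\mu_t-\dot\mu_t\ddot\mu_s)-2\dot\mu_s\dot\mu_t^{2}\frac{\delta'}{\delta}+2\dot\mu_s^{2}\dot\mu_t\frac{\gamma'}{\gamma}+\dot\mu_t^{3}\frac{\gamma\gamma'}{\delta^2}-\dot\mu_s^{3}\frac{\delta\delta'}{\gamma^2}\Bigr].
\]
Using $\dot\mu_s\ge 0$, $\dot\mu_t\le 0$, $\ddot\mu_s,\ddot\mu_t<0$ and $\delta',\gamma'\ge 0$, each of the five bracketed summands is $\le 0$; the first is strictly negative on all of $[0,r_0]$ (at $r=0$ it equals $\mu_t''(0)$, at $r=r_0$ it equals $\mu_s''(r_0)$, and at interior $r$ both $\dot\mu_s\ddot\mu_t$ and $-\dot\mu_t\ddot\mu_s$ are strictly negative). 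Hence the bracket is strictly negative, $\langle\nabla^{g_2}_T T,\nu\rangle_{g_2}<0$, and $k_r>0$.

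The main obstacle is this last step: setting up the geodesic-curvature computation in the non-Euclidean 2-metric $g_2$ and carefully verifying that every correction term introduced by the warping functions has the correct sign. The monotonicity conditions (a) and (b) are precisely what make each correction term contribute nonpositively, so together with the concavity of $\mu$ they suffice; condition (c) is not used in the normal-curvature positivity itself and serves only to keep $g$ admissible for the Perelman gluing applied elsewhere.
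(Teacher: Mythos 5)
Your proposal is correct and follows essentially the same route as the paper: both verify positive definiteness of the second fundamental form of $\partial\mathbf{E}$ with respect to the outward normal by working in the block decomposition $TS^{m-1}\oplus TS^{n-1}\oplus\mathrm{span}(T)$ and checking the sign of each term using the hypotheses on $\alpha,\beta,\gamma,\delta,\mu_s,\mu_t$ (your normalized formulas $k_{m-1}=A\alpha'/\alpha+B\delta'/\delta$, etc., and your expression for $\langle\nabla_T T,\nu\rangle$ agree with the paper's covariant-derivative computations). If anything your version is slightly more careful, since you explicitly resolve the degenerate points $r=0$ and $r=r_0$ where a sphere factor collapses, which the paper only treats implicitly.
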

\begin{proof}
We work locally, and begin by fixing a point
$x_1=(s_1,a_1,t_1,b_1) \in \mathbf{E}$, where $a_1 \in S^{m-1}$ and
$b_1 \in S^{n-1}.$ About the points $a_1$ and $b_1$, introduce normal
coordinate systems locally into $S^{m-1}$ and $S^{n-1}.$ Together with
the $s$ and $t$ coordinates, these combine to give a local coordinate
system in $D^{m} \times D^n.$ With respect to these coordinates we can
represent $g$ by the block-diagonal matrix
\begin{equation*}
 \begin{split}
 g = \begin{bmatrix} \delta^2(t) & & & \\ & \gamma^2(s) & & \\ & &
   \delta^2(t)\alpha^2(s)A_{m-1} & \\ & & &
   \gamma^2(s)\beta^2(t)B_{n-1}
 \end{bmatrix},
 \end{split}
\end{equation*}
where $A_{m-1}$ and $B_{n-1}$ represent $ds^2_{n-1}$ respectively
$ds^2_{m-1}$ with respect to the chosen normal coordinate systems on
the spheres. Note that at the the points $a_1$ and $b_1$, $A_{m-1}$
and $B_{n-1}$ are both identity matrices and have vanishing first
derivatives. Hence at the point $x_1$ we have $g_{ij}\neq 0$ if and
only if $i=j,$ $g^{ii}=\frac{1}{g_{ii}},$ and the derivatives
$g_{ij,k}=0$ whenever $k$ is a direction tangent to $S^{m-1}$ or
$S^{n-1}.$ We will assume that all computations below are carried out
at this point.

Using the formula
\begin{equation*}
  \Gamma_{ij}^{k}=\frac{1}{2}g^{kl}(g_{il,j}+g_{jl,i}-g_{ij,l}),
\end{equation*}
it is straightforward to compute the corresponding Christoffel
symbols. The list below consists of precisely those Christoffel
symbols which are non-zero. Beginning with the case when each of the
indices $i,j$ and $k$ are $s$ or $t$, we have the following.
\begin{equation*}
  \Gamma^s_{st}=\Gamma_{ts}^{s}=\frac{\delta'(t)}{\delta(t)},\quad \Gamma^s_{tt}=
  \frac{-\gamma'(s)\gamma(s)}{\delta^{2}(t)},\quad \Gamma^t_{ss}=
  \frac{-\delta(t)\delta'(t)}{\gamma^{2}(s)},\quad \Gamma^t_{st}=
  \Gamma^t_{ts}= \frac{ \gamma'(s)}{\gamma(s)}.
\end{equation*}
Then, using the symbols $a$ and $b$ to represent any of the coordinate
functions on $S^{m-1}$ or $S^{n-1}$ respectively, we list the
remaining non-zero Christoffel symbols.
\begin{equation*}
\Gamma^s_{aa}=-\alpha(s)\alpha'(s), \quad
\Gamma^a_{sa}=\Gamma_{as}^{a}=\frac{\alpha'(s)}{\alpha(s)},\quad 
\Gamma^t_{aa}=\frac{-\delta(t)\delta'(t)\alpha^{2}(s)}{\gamma^{2}(s)},\quad
\Gamma^a_{ta}=\Gamma_{at}^{a}=\frac{\delta'(t)}{\delta(t)},
\end{equation*}
\begin{equation*}
\Gamma^s_{bb}=\frac{-\gamma(s)\gamma'(s)\beta^{2}(t)}{\delta^{2}(t)}, \quad
\Gamma^b_{sb}=\Gamma_{bs}^{b}=\frac{\gamma'(s)}{\gamma(s)},\quad 
\Gamma^t_{bb}=-\beta(t)\beta'(t),\quad
\Gamma^b_{tb}=\Gamma_{bt}^{b}=\frac{\beta'(t)}{\beta(t)}.
\end{equation*}
From this we compute certain covariant derivatives involving
coordinate vector fields, $\p_s, \p_t, \p_a$ and $\p_b$, which will we
will make use of shortly. In particular, we see that at the point
$x_1$ we have
\begin{equation*}
\nabla_{\p_s}{\p_s}=\frac{-\delta(t)\delta'(t)}{\gamma^{2}(s)}\p_{t},\quad
\nabla_{\p_t}{\p_t}=\frac{-\gamma'(s)\gamma(s)}{\delta^{2}(t)}\p_{s},\quad
\nabla_{\p_s}{\p_t}=\nabla_{\p_t}{\p_s}=\frac{\delta'(t)}{\delta(t)}\p_{s}+\frac{ \gamma'(s)}{\gamma(s)}\p_{t},
\end{equation*}
\begin{equation*}
\nabla_{\p_t}{\p_a}=\nabla_{\p_a}{\p_t}=
\frac{\delta'}{\delta}\p_a, \quad \nabla_{\p_s}{\p_a}=\nabla_{\p_a}{\p_s}=
\frac{\alpha'}{\alpha}\p_a,
\end{equation*}
\begin{equation*}
\nabla_{\p_t}{\p_b}=\nabla_{\p_b}{\p_t}=\frac{\beta'}{\beta}\p_b, \quad \nabla_{\p_s}{\p_b}=\nabla_{\p_b}{\p_s}=\frac{\gamma'}{\gamma}\p_b,
\end{equation*}
\begin{equation*}
\nabla_{\p_a}{\p_a}=-\frac{\delta'\delta\alpha^2}{\gamma^2}\p_t-\alpha'\alpha\p_s, \quad \nabla_{\p_b}{\p_b}=-\frac{\gamma'\gamma\beta^2}{\delta^2}\p_s-\beta'\beta\p_t,
\end{equation*}
and $\nabla_{\p_a}{\p_b}=\nabla_{\p_b}{\p_a}=0.$

The statement that all normal curvatures are positive is clearly
equivalent to the statement that the second fundamental form is
positive definite. We will compute second fundamental forms, and will
break up the computation into directions tangent to $S^{m-1}$,
$S^{n-1}$, and tangent to the curve $\mu$. Notice that $\mu'(r)$ is
everywhere tangent to the boundary of the ellipsoid, and this
direction is orthogonal (with respect to $g$) to both $S^{m-1}$ and
$S^{n-1}.$ Explicitly we have
$T(r):=\mu'(r)=\mu_s'(r)\partial_s+\mu_t'(r)\partial_t.$ It is easy to
see that the outward normal vector to the ellipsoid lies in the
$(s-t)$-plane. If we represent it as $N=c_s \partial_s
+c_t \partial_t$ then it is clear that the coefficients $c_s, c_t$ are
functions of $r$. Moreover, it is evident from our choice of $\mu$
that $c_s(r_0)=0,$ $c_t(0)=0,$ and that $c_s,c_t>0$ otherwise.

The second fundamental form $II(u,v)$ is defined by
$II(u,v)=-g(\nabla_u v,N).$ Thus in order to show positive
definiteness it suffices to establish that the components of $\nabla_u
u$ in the $\partial_s$ and $\partial_t$ directions are non-positive,
at least one of the coefficients is negative for all $r \in (0,r_0),$
at $r=0$ (where $c_t=0$) we need the coefficient of $\partial_s$ to be
negative, and at $r=r_0$ (where $c_s=0$) we need the coefficient of
$\partial_t$ to be negative. (Of course if $u \in TS^{m-1}$ then we
must automatically have $r>0$ else this sphere is not defined, and
similarly we need $r<r_0$ if $u \in TS^{n-1}$.)

Consider first $\partial_a \in TS^{m-1}.$ From the covariant
derivative expressions above we observe that the coefficient of
$\partial_s$, namely $-\alpha'\alpha,$ is non-positive and strictly
negative for all $r \in (0,r_0),$ however it vanishes at $r=r_0$. (We
have $r>0$ in order for the vector $\p_a$ to make sense, as noted
above.) The coefficient of $\partial_t$ is
$-\delta'\delta\alpha^2\gamma^{-2},$ and this is clearly non-negative,
but negative at $r=r_0$ since $\delta'(t_0)>0$ by definition. Thus we
have $II(\p_a,\p_a)<0$ as required. Analogous arguments apply for
$II(\p_b,\p_b).$

Next, we investigate $\nabla_T T.$ We have
\begin{align*}
  \nabla_T T=&\mu'_s(\p_s \mu'_s)\p_s+{\mu'_s}^2\nabla_{\p_s}{\p_s}+
  \mu'_s(\p_s\mu'_t)\p_t+\mu'_s\mu'_t\nabla_{\p_s}{\p_t} \\
  & +\mu'_t(\p_t \mu'_s)\p_s+\mu'_t\mu'_s\nabla_{\p_t}{\p_s}+
  \mu'_t(\p_t\mu'_t)\p_t+{\mu'_t}^2\nabla_{\p_t}{\p_t}. 
\end{align*}
In order to simplify this expression, we note that by definition of
$\mu,$ the coordinate functions $\mu_s(r)$ and $\mu_t(r)$ are
one-to-one, and therefore invertible. Viewing $s$ as a function of $r$
along $\mu$ we clearly have $s(r)=\mu_s(r),$ and hence
$r(s)=\mu^{-1}_s(s).$ Differentiating with respect to $s$ then yields
\begin{align*}
  \p_s\mu_s'(r)&=\p_s\mu_s'(\mu^{-1}_s(s)) \\
&=\mu_s''(\mu_s^{-1}(s))\frac{1}{\mu'_s(\mu^{-1}(s))} \\
&=\mu''_s(r)/\mu_s'(r). 
\end{align*}
Analogous computations give 
\begin{equation}\label{derivs1}
  \p_s \mu'_t=\mu''_t/\mu'_s, \quad \p_t\mu'_s=\mu_s''/\mu_t',
  \quad \p_t\mu_t'=\mu_t''/\mu_t'.
\end{equation}
It follows immediately that
\begin{equation}\label{derivs2}
  \mu'_s(\p_s \mu'_s)=\mu_s'',
  \quad \mu'_s(\p_s \mu'_t)=\mu_t'',
  \quad \mu'_t(\p_t \mu_s')=\mu_s'',
  \quad \mu_t'(\p_t \mu_t')=\mu_t''.
\end{equation}
Notice that for the above calucations to be valid as stated, we must
assume that $\mu'_s,\mu'_t \neq 0.$ This is fine precisely when $r \in
(0,r_0).$ However, observe that the right-hand sides of the
expressions (\ref{derivs2}) are defined for all $r \in [0,r_0],$ and
we can infer from this that the limits as $r \to 0^+$ and $r \to
r_0^-$ in (\ref{derivs1}) must be well-behaved.

We can now use the above calculations to simplify the expression for
$\nabla_T T:$
\begin{align*}
  \nabla_T T=&\p_s\Bigl(2\mu_s''-{\mu'_s}^2\frac{\delta'\delta}{\gamma^2}+
  2\mu'_s\mu'_t\frac{\delta'}{\delta}\p_s\Bigr) \\
  & +\p_t\Bigl(2\mu_t''-{\mu_t'}^2\frac{\gamma'\gamma}{\delta^2}+
  2\mu'_s\mu'_t\frac{\gamma'}{\gamma}\Bigr). 
\end{align*}
In each of the above brackets, notice that the terms are negative,
non-positive and non-positive respectively. It follows that
$II(T,T)>0$ as required.

It remains, then to consider `mixed' terms, that is $II(v,w)$ where
$v$ and $w$ belong to two of the three basic directions in $\p E$,
namely $TS^{m-1},$ $TS^{n-1}$ and $Span\{T\}$.  We see from the
covariant derivative expressions above that $\nabla_v w=\nabla_w v=0$
if $v \in TS^{m-1}$ and $w \in TS^{n-1},$ and hence in this case we
have $II(v,w)=0.$ For this $v$ and $w$ we also have
$\nabla_{\partial_t} v=\frac{\delta'}{\delta}v$ and
$\nabla_{\partial_t} w=\frac{\delta'}{\delta}w,$ and so
$II(\partial_t,v)=-\frac{\delta'}{\delta}g(v,N)=0.$ Similarly with
$\partial_s$ in place of $\partial_t$. Thus the mixed terms of $II$
all vanish, and hence we can conclude that $II$ is positive definite,
as required.
\end{proof}
Let us summarise the above constructions:
\begin{prop} 
There is a Ricci positive metric $g$ on $D^{m} \times D^n$ and a
codimension zero solid ellipsoid $\mathbf{E} \subset D^{m} \times D^n$
such that $\partial \mathbf{E}$ is a smooth submanifold of $D^{m}
\times D^n$ and the normal curvatures of $\partial \mathbf{E}$ (with
respect to the outward pointing normal) are all positive.
\end{prop}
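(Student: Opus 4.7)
The proof is essentially an assembly of the constructions carried out in the preceding three subsections, together with Lemma \ref{lem:normal-curv}, so the plan is to state carefully which ingredient gives which conclusion and why they are mutually compatible.

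First I would fix warping functions $\alpha:[0,s_1]\to[0,\infty)$ and $\beta:[0,t_1]\to[0,\infty)$ satisfying the four bulleted conditions (odd at the origin with unit derivative, strictly positive derivative, and strictly concave), so that the auxiliary doubly warped product $h=ds^2+\alpha^2(s)ds^2_{m-1}+dt^2+\beta^2(t)ds^2_{n-1}$ has $\Ric(h)>0$ on all of $D^m\times D^n$ by the standard warped product curvature formulas (\cite[\S 9J]{B}). Next, I would choose $\delta(t)$ and $\gamma(s)$ satisfying conditions (a) and (b) and sufficiently $C^2$-close to the constant function $1$; by the openness of the positive Ricci condition, this perturbation can be arranged so that the modified metric
\begin{equation*}
g=\delta^2(t)ds^2+\delta^2(t)\alpha^2(s)ds^2_{m-1}+\gamma^2(s)dt^2+\gamma^2(s)\beta^2(t)ds^2_{n-1}
\end{equation*}
still has $\Ric(g)>0$ on a neighbourhood of the ellipsoid to be built, giving condition (c). This will be the candidate metric.

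Next, I would construct the solid ellipsoid $\mathbf{E}$. Choose a unit-speed curve $\mu(r)=(\mu_s(r),\mu_t(r))$ in the $(s,t)$-plane joining $(0,t_0)$ to $(s_0,0)$ and satisfying the concavity and parity conditions (1) and (2), and define $\mathbf{E}$ to be the sub-level set of this curve inside $D^m\times D^n$. Smoothness of $\partial\mathbf{E}$ away from the two `ends' $r=0,r_0$ is automatic; at the ends one checks the four classical smooth-sphere-metric boundary conditions (i)--(iv) for the induced metric $dr^2+\alpha^2(\mu_s(r))ds^2_{m-1}+\beta^2(\mu_t(r))ds^2_{n-1}$. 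Parity conditions (ii)--(iii) follow from the parity of $\alpha,\beta$ near zero together with the parity of $\mu_s,\mu_t$ near $r=0$ and $r=r_0$; the derivative condition (iv) falls out of the chain rule once one uses $\alpha'(0)\mu_s'(0)=1$ and $\beta'(0)\mu_t'(r_0)=-1$. Hence $\partial\mathbf{E}$ is a smooth submanifold of $D^m\times D^n$.

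Finally, I would invoke Lemma \ref{lem:normal-curv}, which is precisely the statement that the second fundamental form of $\partial\mathbf{E}$ with respect to the outward normal of the ambient metric $g$ is positive definite. Combined with the Ricci positivity of $g$ established above and the smoothness of $\partial\mathbf{E}$, this delivers all three assertions of the proposition. The main obstacle in this strategy is the dual role of $\delta$ and $\gamma$: they must be nontrivial in order to make the normal curvatures at $r=0$ and $r=r_0$ strictly positive (since these vanish for the unperturbed metric $h$), yet they must remain close enough to $1$ for the warped product Ricci positivity of $h$ to survive the perturbation. The resolution is precisely the openness argument in (c), choosing $\delta,\gamma$ with arbitrarily small $C^2$-variation from $1$ but nonzero derivatives at $t_0$ and $s_0$ respectively.
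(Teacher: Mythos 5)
Your proposal is correct and follows essentially the same route as the paper: the proposition is stated there precisely as a summary of the preceding three subsections (the doubly warped metric $h$, its perturbation to $g$ via $\delta,\gamma$, the curve $\mu$ defining $\mathbf{E}$ with the smoothness conditions (i)--(iv), and Lemma \ref{lem:normal-curv} for the positivity of the second fundamental form). Your identification of the tension between perturbing $\delta,\gamma$ enough to get strict positivity of the normal curvatures at $r=0,r_0$ while staying $C^2$-close to $1$ to preserve $\Ric>0$ is exactly the point the paper resolves via openness in condition (c).
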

\subsection{Proof of the Main Theorem}
Recall from section \ref{sec:Intro} that to establish the theorem it
suffices to construct a fiberwise Ricci positive metric on each
Hatcher sphere bundle. In order to do this, we will begin by
reconsidering the construction of the Hatcher disc bundle from section
\ref{Hatch}.

Now we switch to the relevant notations, i.e., $m=p+1$ and $n=q$. For
each point $y \in D^{4k}_+$ we have
\begin{equation*}
D^{p+1} \times D^q =P_y
\cup_{id} A_y \stackrel{\Phi_y}{\cong} P_y \cup_{\Lambda_y} {\mathcal
  A}_y,
\end{equation*}
where we refer the reader to section \ref{Hatch} for the notation. The
ellipsoid $\mathbf{E}$ is a subset of $D^{p+1} \times D^q$, and so for
each $y \in D^{4k}_+$ there is an ellipsoid 
\begin{equation*}
\mathbf{E}_y:=\Phi_y(\mathbf{E})
\subset P_y \cup_{\Lambda_y} {\mathcal A}_y.
\end{equation*}
Collectively, these ellipsoid fibres form a sub-bundle ${\boldsymbol
  {\mathcal E}}^{ell}_{+}$ of ${\mathcal E}_{+}.$ Pushing forward the
metric $g$ via $\Phi_y$ and restricting to $\mathbf{E}_y$ equips each
$\mathbf{E}_y$ with a Ricci positive metric with positive normal
curvatures (with respect to the outward normal) at the boundary.
Moreover as $y$ varies across $D^{4k}_+,$ we obtain in this way a
smoothly varying family of fibre metrics on ${\boldsymbol {\mathcal
 E}}^{ell}_{+}.$

We similarly form a product bundle ${\boldsymbol {\mathcal
    E}}^{ell}_{-} \to D^{4k}_{-}$ with total space $D^{4k}_{-} \times
\mathbf{E}$, and take the obvious fiberwise metric where each fibre
$\mathbf E$ is equipped with the metric induced by $g$.
For each fibre $\mathbf{E}_y \subset {\boldsymbol {\mathcal
    E}}^{ell}_{+}$, notice that we have a
decomposition
\begin{equation*}
\mathbf{E}_y=(\mathbf{E}_y \cap P_y) \cup
(\mathbf{E}_y \cap {\mathcal A}_y),
\end{equation*}
and similarly for the fibres of
${\boldsymbol {\mathcal E}}^{ell}_{-}.$

In order to form the Hatcher disc bundle, we need to glue the bundles
${\boldsymbol {\mathcal E}}^{ell}_{+}$ and ${\boldsymbol {\mathcal
    E}}^{ell}_{-}$ along the boundaries of their base discs. The
procedure for gluing the `full' disc bundles ${\mathcal E}_{+}$ and
${\mathcal E}_{-}$ is described at the end of section
\ref{Hatch}. Recall that for each pair of fibres in ${\mathcal E}_{+}$
and ${\mathcal E}_{-}$ being identified, the annulus parts are
identified via the identity map, but the inner `puck' regions are
identified using diffeomorphisms $\tilde{\Lambda}_y: P_y \to P_y,$
which by Lemma \ref{lem:goette} split as a product of rotations on the
two disc factors. Before proceeding further, we note that these gluing
maps restrict to give gluing maps between ${\boldsymbol {\mathcal
    E}}^{ell}_{+}$ and ${\boldsymbol {\mathcal E}}^{ell}_{-},$ since
the annulus and puck parts of the respective ellipsoid bundles agree
near the boundary of the base discs, and are invariant under rotation
of the factors. Note further that by Corollary \ref{cor:obs} in
section \ref{Hatch}, we do not lose any gluing information by reducing
the fibres from the original product of discs considered in section
\ref{Hatch} to the ellipsoids currently under consideration. Thus the
bundle we will construct using ${\boldsymbol {\mathcal E}}^{ell}_{+}$
and ${\boldsymbol {\mathcal E}}^{ell}_{-}$ will be diffeomorphic to
that formed from ${\mathcal E}_{+}$ and ${\mathcal E}_{-}.$

From a metric perspective, let us focus first on the puck
sub-bundles. As $\Phi_y$ is the identity mapping on $P_y$, the puck
sub-bundle within ${\boldsymbol {\mathcal E}}^{ell}_{+}$ is just a
product, with each fibre equipped with the restriction of $g$. Now the
metric $g$ displays rotational symmetry with respect to both disc
factors, and so pulling-back $g|_{P_y}$ via the map
$\tilde{\Lambda}_y$ results in a metric identical to $g|_{P_y}.$ Since
we have set things up so that the metrics near the boundaries of both
${\mathcal E}_{+}$ and ${\mathcal E}_{-}$ are independent of the
radial parameter in the base, we see that gluing the puck sub-bundles
along $S^{4k-1}$ in this way yields a smooth fiberwise metric. (It is
perhaps worth remarking that if we were trying to construct a
submersion metric on the {\it whole} Hatcher disc bundle - as opposed
to creating a mere fiberwise metric - then the twisting involved in
gluing the bundles ${\mathcal E}_{+}$ and ${\mathcal E}_{-}$ would
have non-trivial metric implications in directions transverse to the
fibres.)

Turning our attention to the gluing of the annular regions, we
similarly observe that the metric on the annuli close to the boundary
of ${\mathcal E}_{+}$ is a push-forward via $\Phi_y$ of the
rotationally symmetric metric $g|_{A_y}.$ Although $\Phi_y$ acts
non-trivially on the annuli, it nevertheless acts by rotation in both
$S^{m-1}$ and $S^{n-1}$ directions for $y$ close to $\p D^{4k}_{+}.$
Thus the pull-back metric on the annuli is identical to the original
over the boundary of the base disk, and so gluing the annular part of
${\mathcal E}_{+}$ to ${\mathcal E}_{-}$ via the identity creates a
smooth fiberwise metric in the annular region also.

In summary, we have created a smooth fibrewise Ricci positive metric
on the fibres of the Hatcher disc bundle ${\mathcal E}={\mathcal E}_+
\cup {\mathcal E}_{-}.$ It is immediate that restricting everything in
the above argument to the ellipsoid sub-bundles ${\boldsymbol
  {\mathcal E}^{ell}}_{-}$ and ${\boldsymbol {\mathcal E}}^{ell}_{+}$
creates a fiberwise Ricci positive metric on the ellipsoid sub-bundle
of the Hatcher disc bundle ${\boldsymbol {\mathcal E}}^{ell} \subset
{\mathcal E}$, with the normal curvatures at the boundary of each
fibre being positive with respect to the outer normal.

Finally, we wish to glue two identical copies of the Hatcher disc
bundle ${\boldsymbol {\mathcal E}}^{ell}$ equipped with the above
fiberwise metric so as to construct the desired Hatcher sphere
bundle. Metrically this is now possible using the family gluing result,
Theorem \ref{family}, as a
consequence of the positive normal curvatures at the boundary. We thus
create a Hatcher sphere bundle with a smooth fibrewise Ricci positive
metric, as required to establish the theorem. \hfill $\square$


\end{document}